\documentclass[12pt]{article}

\usepackage[utf8]{inputenc}

\usepackage[english]{babel}

\usepackage[shortlabels]{enumitem}
\usepackage[T1]{fontenc}

\usepackage{tikz}
\usepackage{verbatim}

\usepackage{amsmath,amsfonts,amssymb,amsthm}
\usepackage{nccmath}
\usepackage{mathrsfs,indentfirst, latexsym}
\usepackage{amscd}
\usepackage{eucal}
\usepackage{fancyhdr}

\numberwithin{equation}{section}

\newtheorem{theorem}{Theorem}
\newtheorem{lemma}[theorem]{Lemma}
\newtheorem{proposition}[theorem]{Proposition}
\newtheorem{corollary}[theorem]{Corollary}
\newtheorem{question}[theorem]{Question}

\theoremstyle{definition}
\newtheorem{definition}[theorem]{Definition}

\newtheorem{remark}[theorem]{Remark}

\newcommand{\N}{\mathbb{N}} 
\newcommand{\Z}{\mathbb{Z}} 
\newcommand{\C}{\mathbb{C}} 


\newcommand{\F}{\mathscr{F}}

\newcommand{\Lin}{\mathcal{L}}
\newcommand{\p}{\mathcal{P}}
\newcommand{\E}{\mathcal{D}}

\newcommand{\abs}[1]{\left\lvert#1\right\rvert}

\newcommand{\norm}[1]{\lVert#1\rVert}

\begin{document}

\title{Linear dynamics and recurrence properties defined via essential idempotents of $\beta\N$}


\author{Yunied Puig\footnote{Universit\`{a} degli Studi di Milano, Dipartimento di Matematica "Federigo Enriques", Via Saldini 50 - 20133 Milano, Italy. e-mail:puigdedios@gmail.com}}

\date{}
\maketitle

\begin{abstract}
Consider $\mathscr{F}$ a non-empty set of subsets of $\N$.  An operator $T$ on $X$ satisfies property $\p_{\mathscr{F}}$ if for any $U$ non-empty open set in $X$, there exists $x\in X$ such that $\{n\geq 0: T^nx\in U\}\in \mathscr{F}$. Let $\overline{\mathcal{BD}}$ the collection of sets in $\N$ with positive upper Banach density. Our main result is a characterization of sequence of operators satisfying property $\p_{\overline{\mathcal{BD}}}$, for which we have used a deep result of Bergelson and McCutcheon in the vein of Szemer\'{e}di's theorem. It turns out that operators having  property $\p_{\overline{\mathcal{BD}}}$ satisfy a kind of recurrence described in terms of essential idempotents of $\beta \N$. We will also discuss the case of weighted backward shifts. Finally, we obtain a characterization of reiteratively hypercyclic operators.
\end{abstract}

 KEYWORDS: \emph{hypercyclic operator, reiteratively hypercyclic operator,\\ essential idempotent, $\E$-recurrence}\\
 
 MSC (2010): Primary: 47A16; Secondary: 05D10

\section{Introduction}

This paper deals with various recurrence results for linear operators on separable Banach spaces, hence the terminology linear dynamics. These results may be viewed as being roughly analogous to recurrence results for measure-preserving transformations on probability spaces.

 The celebrated theorem of Szemer\'{e}di asserts that any subset of the integers with positive upper density contains arbitrarily large arithmetic progressions. There have been several proofs of Szemer\'{e}di's theorem. One reason for this is that each proof reveals a new facet of Szemer\'{e}di's theorem, allowing to connect different fields of mathematics. For example, Furstenberg's proof in 1977 \cite{Furstenberg} showed that techniques of ergodic theory can be used to prove many Ramsey theoretic results, including Szemer\'{e}di's theorem and certain extensions of Szemer\'{e}di's theorem that were previously unknown. In \cite{Costakis}, Costakis and Parissis examine how some notions of recurrence in linear dynamics are connected to classical notions of recurrence in topological dynamics, in which Szemer\'{e}di's theorem plays an important role. 
 
  In the present work, we improve the results of \cite{Costakis} using a result of Bergelson and McCutcheon \cite{Bergelson0} concerning ergodic Ramsey theory, in the vein of Szemer\'{e}di's theorem. The marriage of combinatorics and recurrence begins to spread to linear dynamics and we hope it will be fruitful for future research. 
\subsection{Preliminaries and main results}
Let $X$ be a complex infinite-dimensional separable Banach space, $T$ a continuous  and linear operator on $X$, denoted $T\in \mathcal{L}(X)$. The main object of study in linear dynamics is the notion of hypercyclicity. The operator $T$ is said to be \emph{hypercyclic} if there exists $x\in X$, called hypercyclic vector, such that its $T$-orbit $\{T^nx: n\geq 0\}$ is dense in $X$.

Birkhoff's transitivity Theorem \cite{Bayart}, asserts that whenever $X$ is a complete, separable and metrizable topological vector space, $T$ is hypercyclic if and only if $T$ is \emph{topologically transitive}, i.e. if for every pair of non-empty open sets $U, V$ in $X$, the set $N(U, V):=\{n\geq 0: T^nU\cap V\neq\emptyset\}$ is non-empty.  Given a set $\F$ of subsets of $\N$, we say that $\F$ is a \emph{family} provided (I.) $ \abs{A}=\infty$ for any $A\in \F$, where $\abs{A}$ denotes the cardinality of $A\subseteq \N$ and (II.) $ A\subset B$ implies $B\in \F$, for any $A\in \F$. According to Birkhoff's transitivity Theorem, it is natural to refine the notion of hypercyclicity in the following way: given a family  $\F$ on $\N$, an operator $T\in \mathcal{L}(X)$ is called an \emph{$\F$-transitive operator} ($\F$-operator for short), if the set $N(U, V)\in \F$, for all non-empty open sets $U, V$ in $X$. This notion was introduced in \cite{Peris1}, which contains an analysis of the hierarchy established between $\F$-operators, whenever $\F$ covers families frequently studied in Ramsey theory.

 An equivalent way of seeing hypercyclicity is the following: an operator $T$ is hypercyclic if  there exists some $x\in X$ such that for every non-empty open set $U\subset X$, the return set $N(x, U):=\{n\geq 0: T^n x\in U\}$ is non-empty. Of particular interest is a strengthened form of hypercyclicity in which for some $x\in X$, its $T$-orbit visits to each non-empty open set  are quantified. Specifically, 
\begin{itemize}
 \item  $T$ is said to be \emph{frequently hypercyclic} if there exists some $x\in X$, called frequently hypercyclic vector, such that for every non-empty open set $U\subset X$, the return set $N(x, U)\in \mathcal{\underline{AD}}$.
 
  \item  $T$ is said to be \emph{$\mathfrak{U}$-frequently hypercyclic} if there exists some $x\in X$, called $\mathfrak{U}$-frequently hypercyclic vector, such that for every non-empty open set $U\subset X$, the return set $N(x, U)\in \mathcal{\overline{AD}}$.
  
   \item  $T$ is said to be \emph{reiteratively hypercyclic} if there exists some $x\in X$, called reiteratively hypercyclic vector, such that for every non-empty open set $U\subset X$, the return set $N(x, U)\in \mathcal{\overline{BD}}$.
\end{itemize}
Here, $\mathcal{\underline{AD}}(\mathcal{\overline{AD}})$ denotes the collection of sets with positive \emph{lower (upper) density}, i.e. $\mathcal{\underline{AD}}=\{A\subseteq \N: \underline{d}(A)>0\} (\mathcal{\overline{AD}}=\{A\subseteq \N: \overline{d}(A)>0\})$ with 
\[
\underline{d}(A)=\liminf_{n\to\infty}\frac{\abs{A\cap [1,n]}}{n} \qquad
\overline{d}(A)=\limsup_{n\to\infty}\frac{\abs{A\cap [1,n]}}{n},
\]
  and $\mathcal{\overline{BD}}$ denotes the collection of sets with positive \emph{upper Banach density}, i.e. $\mathcal{\overline{BD}}=\{A\subseteq \N: \overline{Bd}(A)>0\}$ with $\overline{Bd}(A)=\lim_{s\to\infty}\frac{\alpha^s}{s}$ and 
 \begin{equation}
 \label{def.UpperBanachDens}
 \alpha^s=\limsup_{k\to\infty}\abs{A\cap [k+1, k+s]}.
 \end{equation}

 It is known that 
    \begin{equation}
    \label{eq.density}
     \underline{d}(A)\leq \overline{d}(A) \leq \overline{Bd}(A),
    \end{equation}
    for any $A\subseteq \N$.
Observe that by definition every frequently hypercyclic operator is $\mathfrak{U}$-frequently hypercyclic. For further information about the relationship between these classes of operators, we refer the reader to \cite{Grosse1}, \cite{Bayart} and the references within these, as well as to \cite{BaRu}.  Observe also that every $\mathfrak{U}$-frequently hypercyclic operator is reiteratively hypercyclic. For further information about the relationship between these classes of operators we refer to \cite{Menet} and \cite{BMPP1}.

We are interested in studying a more general notion appearing when the roles of $x$ and $U$ are interchanged in the definition of frequently hypercyclic, $\mathfrak{U}$-frequently hypercyclic and reiteratively hypercyclic operators. Consider a family $\mathcal{A}$ on $\N$. 

\begin{definition}
 We say that a sequence of operators $(T_n)_n$ \emph{satisfies property $\p_{\mathcal{A}}$}, if for any $U$ non-empty open set in $X$, there exists $x\in X$ such that $\{n\in\N: T_nx\in U\}\in \mathcal{A}$.
 
 An operator $T$ satisfies property $\p_{\mathcal{A}}$ if the sequence $(T^n)_n$ satisfies property $\p_{\mathcal{A}}$.
\end{definition}

Obviously, every operator satisfying $\p_{\mathcal{\overline{AD}}}$ satisfies $\p_{\mathcal{\overline{BD}}}$. Concerning the converse we cannot say anything, although we suspect there must be an operator satisfying $\p_{\mathcal{\overline{BD}}}$ and not satisfying $\p_{\mathcal{\overline{AD}}}$, since there exists a reiteratively hypercyclic operator on $c_0(\Z_+)$ which is not $\mathfrak{U}$-frequently hypercyclic \cite{BMPP1}.

     On the other hand, a central notion in topological dynamics is that of recurrence. In our setting, $T\in \mathcal{L}(X)$ is called \emph{recurrent} if for every non-empty open set $U$ in $X$, the set $\{n\geq 0: U\cap T^{-n} U\neq \emptyset\}$ is non-empty. A stronger and well-known notion of recurrence is the following:   an operator $T\in\mathcal{L}(X)$ is \emph{topologically multiply recurrent} if for every non-empty open set $U$ in $X$ and every $r\in \mathbb{N}$, there is some $k\in \N$ such that 
  \begin{equation}
  \label{condition.def.mult.recurrence}
  U\cap T^{-k}U\cap ... \cap T^{-rk}U\neq \emptyset.
  \end{equation}
   Observe that if $T$ satisfies  $\p_{\mathcal{\overline{AD}}}$ then $T$ is topologically multiply recurrent. This follows easily from Szemer\'{e}di's theorem. Indeed, let $U\subset X$ a non-empty open set, then there exists $x\in X$ such that $N(x, U)$ has positive upper density, then by Szemer\'{e}di's theorem it contains arbitrarily long arithmetic progressions. Hence, for any $r\in \N$, there exist $a, k\in \N$ such that $a, a+k, \dots, a+kr\in \{n\geq 0: T^nx\in U\}$. Thus, $T^ax\in \cap_{j=0}^rT^{-jk}U$ and $T$ is topologically multiply recurrent.
  
  Hence, it is natural to wonder whether $T$ is topologically multiply recurrent whenever some family of linear operators $(\lambda_nT^n)_n$ satisfies property $\p_{\mathcal{\overline{AD}}}$ for some sequence of non-zero complex numbers $(\lambda_n)_n$. In fact, this is the content of a result of Costakis and Parissis \cite{Costakis}.
  
 \begin{theorem} (Theorem 3.8 \cite{Costakis})\
 \label{CostPa}
  Let $(\lambda_n)_{n\in\mathbb{N}}$ be a sequence of non-zero complex numbers which satisfies 
  \[
  \lim_{n\to \infty}\frac{|\lambda_n|}{|\lambda_{n+\tau}|}=1
  \]
  for some positive integer $\tau$. If $T\in\mathcal{L}(X)$ is such that the family $(\lambda_nT^n)_n$ satisfies property $\p_{\mathcal{\overline{AD}}}$, then $T$ is topologically multiply recurrent.
  \label{theor.costakis}
  \end{theorem}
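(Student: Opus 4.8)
The plan is to feed property $\p_{\mathcal{\overline{AD}}}$ a single small ball, extract from it an orbit return set of positive upper density, and then locate inside that set an arithmetic progression whose common difference is a \emph{bounded} multiple of $\tau$ and whose terms are \emph{large}; the iterate of the corresponding vector will witness multiple recurrence. Concretely, fix a non-empty open $U$ and $r\in\N$, choose $u\in U$ and $\eps>0$ with $B(u,2\eps)\subseteq U$, and apply $\p_{\mathcal{\overline{AD}}}$ to $B(u,\eps)$ to obtain $x\in X$ with $A:=\{n\in\N:\norm{\lambda_nT^nx-u}<\eps\}$ of positive upper density. If $a,a+k,\dots,a+rk\in A$, then for $y:=\lambda_aT^ax\in B(u,\eps)\subseteq U$ we have $T^{jk}y=\tfrac{\lambda_a}{\lambda_{a+jk}}\,\lambda_{a+jk}T^{a+jk}x$, whence $\norm{T^{jk}y-u}\le\abs{\tfrac{\lambda_a}{\lambda_{a+jk}}-1}(\norm u+\eps)+\eps$; so it is enough to find such a progression inside $A$ along which $\abs{\lambda_a/\lambda_{a+jk}-1}<\eps/(\norm u+\eps)$ for $j=1,\dots,r$, for then $y\in U\cap T^{-k}U\cap\dots\cap T^{-rk}U$.

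To prepare for this, I would first split $\N$ into $M$ classes according to which of $M$ equal circular arcs contains $\arg\lambda_n$, with $M$ large and depending only on $\eps$ and $\norm u$; by finite subadditivity of upper density some class meets $A$ in a set of positive upper density, on which any two values $\arg\lambda_n$ differ by less than $2\pi/M$. Pigeonholing this set once more over residues modulo $\tau$ produces $A'\subseteq\tau\N+c$ of positive upper density, hence — by \eqref{eq.density} — of positive upper Banach density $\beta>0$, and every common difference of a progression inside $A'$ is automatically a multiple of $\tau$.

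The crucial point is the order in which the remaining parameters are fixed. I would choose the window length $s$ \emph{first}, large enough for the finite form of Szemer\'edi's theorem (density $\beta/4$, length $r+1$) and for the elementary passage from $\alpha^s\ge(\beta/2)s$ to windows carrying $(\beta/4)s$ points of $A'$; only then choose $\eps_0>0$ so small that $(1\pm\eps_0)^{s/\tau}$ lies within the tolerance dictated by $\eps,\norm u,M$; and only then invoke the hypothesis to get $N_0$ with $\abs{\lambda_n}/\abs{\lambda_{n+\tau}}\in(1-\eps_0,1+\eps_0)$ for all $n\ge N_0$. Since $\alpha^s=\limsup_k\abs{A'\cap[k+1,k+s]}\ge(\beta/2)s$, there is a window $[w+1,w+s]$ with $w\ge N_0$ carrying at least $(\beta/4)s$ points of $A'$; the finite Szemer\'edi theorem yields inside it a progression $a,a+k,\dots,a+rk$ in $A'$ with $a>N_0$ and $rk<s$, so $k/\tau\le s/(r\tau)$. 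Then $\abs{\lambda_a}/\abs{\lambda_{a+jk}}$ is a product of at most $s/\tau$ consecutive ratios $\abs{\lambda_{a+\ell\tau}}/\abs{\lambda_{a+(\ell+1)\tau}}$, each in $(1-\eps_0,1+\eps_0)$ because all indices involved are $\ge N_0$; combined with the phase estimate $\abs{e^{i(\arg\lambda_a-\arg\lambda_{a+jk})}-1}<2\pi/M$, this gives $\abs{\lambda_a/\lambda_{a+jk}-1}<\eps/(\norm u+\eps)$ for $j=1,\dots,r$, as required.

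I expect the main obstacle to be exactly this tension: the hypothesis controls only single steps of size $\tau$, so $\abs{\lambda_a/\lambda_{a+jk}}$ can be kept near $1$ only when the progression has \emph{bounded} common difference \emph{and} large terms, whereas Szemer\'edi's theorem applied to an interval $[1,N]$ yields progressions whose common difference grows with $N$. The device that breaks the circularity is to run the quantitative Szemer\'edi theorem on a moving window of \emph{fixed} length $s$, with $s$ committed to before $\eps_0$ and $N_0$; positive upper Banach density, obtained for free from positive upper density via \eqref{eq.density}, is precisely what guarantees such windows arbitrarily far out. Controlling the complex phases of the $\lambda_n$, by contrast, is routine — a single finite pigeonhole into circular arcs, legitimate because upper density is subadditive.
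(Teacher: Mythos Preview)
Your proof is correct and self-contained, but it follows a genuinely different route from the paper. The paper does not prove Theorem~\ref{CostPa} directly: it is quoted from \cite{Costakis} and then recovered as a by-product of Theorem~\ref{theor.recurrence}, whose proof replaces Szemer\'edi's theorem by the Bergelson--McCutcheon result (Theorem~1.25 of \cite{Bergelson0}) to obtain, for the full return set $F$, that $\{k:\overline{Bd}(F\cap(F-k)\cap\cdots\cap(F-rk))>0\}\in\E^*$, and handles the complex phases by reducing beforehand to positive $\lambda_n$ via Lemma~3.7 of \cite{Costakis}. Your argument, by contrast, stays entirely at the level of the finite Szemer\'edi theorem applied on a single window of \emph{fixed} length $s$, chosen before the tolerance $\eps_0$ and the threshold $N_0$; you exploit \eqref{eq.density} to pass from positive upper density to positive upper Banach density so that such windows exist arbitrarily far out, and you control the phases by a finite pigeonhole into circular arcs together with a residue-mod-$\tau$ refinement that forces the common difference to be a multiple of $\tau$. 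The trade-off is clear: the paper's route yields the much stronger $\E$-recurrence conclusion (and works under the weaker hypothesis \eqref{equat3'}) at the cost of invoking a deep ergodic-Ramsey theorem, whereas your route gives only topological multiple recurrence but is elementary and makes the quantitative tension between ``bounded common difference'' and ``large starting term'' completely transparent.
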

 
Theorem 3.8 \cite{Costakis} is in fact a stronger version of what the authors call their main theorem of \cite{Costakis}.

The aim of this work is to generalize Theorem \ref{CostPa}. We do this using a deep result due to Bergelson and Mccutcheon \cite{Bergelson0} concerning ergodic Ramsey theory, in the vein of Szemer\'{e}di's theorem. This leads us to a characterization of a sequence of operators satisfying property $\p_{\overline{\mathcal{BD}}}$. In order to do this, we need to introduce a kind of recurrence stronger than topological multiple recurrence. We first recall some notions from ergodic Ramsey theory.

 Recall that a \emph{filter} is a family that is invariant by finite intersections, i.e. $\F$ is a family such that for any $A\in \F, B\in \F$ implies $A\cap B\in \F$.  The collection of all maximal filters (in the sense of inclusion) is denoted by $\beta \N$. Elements of $\beta \N$ are known as \emph{ultrafilters}; endowed with an appropiate topology, $\beta \N$ becomes the Stone-\v{C}ech Compactification of $\N$. Each point $i\in \N$ is identified with a principal ultrafilter $\mathfrak{U}_i:=\{A\subseteq \N:i\in A\}$ in order to obtain an embedding of $\N$ into $\beta \N$. For any $A\subseteq \N$ and $p\in\beta \N$, the closure of $A$, $cl A$ in $\beta \N$ is defined as follows, $p\in clA$ if and only if $A\in p$. Given $p, q\in\beta \N$ and $A\subseteq \N$, the operation $(\N, +)$ can be extended to $\beta \N$ by defining $A\in p + q$ if and only if $\{n\in \N: -n+A\in q\}\in p$. The operation $+$ on $\beta \N$ is continuous with respect to the topology mentioned above. Thus $\beta \N$ becomes a compact topological semigroup, and according to a famous theorem of Ellis, idempotents (with respect to $+$) exist. Let $E(\N)=\{p\in \beta \N: p=p + p\}$ be the collection of idempotents in $\beta \N$. For further details see \cite{Hindman1}. Given a family $\mathscr{F}$, the \emph{dual} family $\mathscr{F}^*$ consists of all sets $A$ such that $A\cap F\neq \emptyset$, for every $F\in \mathscr{F}$. The following lemma is from \cite{BeDo}:
 \begin{lemma}
 \label{LemmaBeDo}
 $(1)$ If $\F$ is an ultrafilter, then $\F^*=\F$.
 
 $(2)$ If $\F=\cup_\alpha \F_\alpha$, then $\F^*=\cap_\alpha \F^*_\alpha$.
 
 In particular, whenever $\F$ is a union of some collection of ultrafilters, then $\F^*$ is the intersection of the same collection of ultrafilters.
 \end{lemma}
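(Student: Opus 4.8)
The statement has two parts, and part (2) together with the identification in part (1) immediately gives the "in particular" clause, so the plan is to prove (1) and (2) and then combine them.

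For part (1): if $\F$ is an ultrafilter, I want to show $\F^* = \F$. Let me recall $\F^* = \{A : A \cap F \neq \emptyset \text{ for all } F \in \F\}$. For the inclusion $\F \subseteq \F^*$: if $A \in \F$, then for any $F \in \F$, since $\F$ is a filter we have $A \cap F \in \F$, hence $A \cap F \neq \emptyset$ (filters don't contain $\emptyset$), so $A \in \F^*$. For the reverse inclusion $\F^* \subseteq \F$: suppose $A \notin \F$. Since $\F$ is an *ultra*filter, $\N \setminus A \in \F$. Then $A \cap (\N \setminus A) = \emptyset$, so $A$ fails to meet a member of $\F$, i.e. $A \notin \F^*$. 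Contrapositively, $\F^* \subseteq \F$.

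For part (2): suppose $\F = \bigcup_\alpha \F_\alpha$. I compute $\F^* = \{A : A \cap F \neq \emptyset \text{ for all } F \in \bigcup_\alpha \F_\alpha\}$. But "$A$ meets every $F \in \bigcup_\alpha \F_\alpha$" is logically equivalent to "for every $\alpha$, $A$ meets every $F \in \F_\alpha$", which is exactly "$A \in \F_\alpha^*$ for every $\alpha$", i.e. $A \in \bigcap_\alpha \F_\alpha^*$. So $\F^* = \bigcap_\alpha \F_\alpha^*$, a purely set-theoretic unwinding of the quantifiers. Finally, the "in particular" statement follows: if $\F$ is a union of ultrafilters $\F = \bigcup_\alpha \F_\alpha$ with each $\F_\alpha$ an ultrafilter, then by (2) $\F^* = \bigcap_\alpha \F_\alpha^*$, and by (1) each $\F_\alpha^* = \F_\alpha$, so $\F^* = \bigcap_\alpha \F_\alpha$.

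None of the steps presents a serious obstacle; this is a formal verification. The only point requiring care is part (1), where the *ultra*filter hypothesis is used precisely once, in the form "$A \notin \F \implies \N\setminus A \in \F$"—this is the maximality of $\F$ and is what makes $\F^* = \F$ rather than merely $\F \subseteq \F^*$ (the latter holding for any filter). I would make sure to flag that a general filter need not equal its dual, to motivate why the hypothesis appears.
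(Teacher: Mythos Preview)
Your proof is correct. Note, however, that the paper does not supply its own proof of this lemma: it merely cites it from \cite{BeDo} and moves on. So there is no ``paper's own proof'' to compare against; your argument is the standard verification one would give, and it would serve perfectly well as a self-contained proof if the paper wanted to include one.
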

 
   We will be mainly concerned with the so-called \emph{essential idempotents} in $\beta\N$. The collection of essential idempotents is commonly referred to in the literature as $\mathcal{D}$. 
  \begin{definition}
  The collection $\E$ (of $D$-sets) is the union of all idempotents $p\in \beta\N$ such that every member of $p$ has positive upper Banach density. Accordingly, $\E^*$ is the intersection of all such idempotents. 
  \end{definition}
  Now we have all what we need in order to introduce a kind of recurrence stronger than topological multiple recurrence in according with our purposes. We will ask that the intersection in condition (\ref{condition.def.mult.recurrence}) is not only non-empty but also satisfies some condition involving Banach density. While it is well-known that infinitely many $k's$ satisfy condition (\ref{condition.def.mult.recurrence}) for any topologically multiply recurrent operator, this will not be enough for us. In addition we want that this set satisfies very specific algebraic properties, for which we introduce what we call $\E$-recurrence for a linear operator.
  \begin{definition}
 
  An operator $T\in\mathcal{L}(X)$ is \emph{topologically $\E$-recurrent with respect to $\lambda=(\lambda_n)_n$} if there exists some $p\in\E$ such that for any non-empty open set $U$ in $X$, there exists $x\in X$, such that for any $r\in \N$, we have
 \[
 \Big\{k\in\N: \overline{Bd}\Big(a\in\N: \lambda_aT^ax\in\cap_{j=0}^{r} T^{-jk}(U)\Big)>0 \Big\}\in p.
 \]
 \end{definition}
 
  In the case, $(\lambda_n)_n=1$, we simply say $T$ topologically $\E$-recurrent. Note that in particular, topological $\E$-recurrence implies  topological multiple recurrence. However the converse is not true.
 
 \begin{proposition}
 \label{top.multrec.butnot.Erec}
 Let $X=c_0(\Z_+)$ or $l^p(\Z_+), 1\leq p<\infty$. Then there exists a topologically multiply recurrent operator on $X$ which is not topologically $\E$-recurrent.
 \end{proposition}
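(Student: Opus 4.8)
The plan is to take for $T$ a suitably chosen unilateral weighted backward shift $B_w$ on $X$; the same construction and argument apply verbatim to $X=c_0(\Z_+)$ and to $X=\ell^p(\Z_+)$, $1\le p<\infty$. Write $B_we_0=0$, $B_we_j=w_je_{j-1}$ for $j\ge1$, and $P_n=w_1w_2\cdots w_n$, $P_0=1$; the weights will lie in $[1/2,2]$, so $B_w$ is bounded. Two reductions organize everything. \emph{Multiple recurrence:} $B_w$ is topologically multiply recurrent provided that for every $r,N\in\N$ and every $M>0$ there is $k>N$ with $P_n>M$ for all $n\in\bigcup_{m=1}^{r}[mk,mk+N]$; this is the routine check that, for a basic open set $U=B(y,\delta)$ with $y=\sum_{i=0}^{N}y_ie_i$, the vector $x=y+\sum_{m=1}^{r}\sum_{i=0}^{N}y_i\frac{P_i}{P_{mk+i}}e_{mk+i}$ lies in $U$ and has $B_w^{mk}x\in U$ for $m=1,\dots,r$ once $M$ is large in terms of $y,\delta,N$ (the $m$-th block of $x$ is sent exactly onto $y$, the others onto small vectors). \emph{Failure of $\E$-recurrence:} it suffices to produce a single nonempty open $U$ with $\overline{Bd}\bigl(\{a\in\N:B_w^ax\in U\}\bigr)=0$ for \emph{every} $x\in X$ — that is, $B_w$ fails property $\p_{\overline{\mathcal{BD}}}$ — since then, because $\bigcap_{j=0}^{r}B_w^{-jk}U\subseteq U$ forces $\{a:B_w^ax\in\bigcap_{j=0}^{r}B_w^{-jk}U\}\subseteq\{a:B_w^ax\in U\}$, the set $\{k:\overline{Bd}(\{a:B_w^ax\in\bigcap_{j=0}^{r}B_w^{-jk}U\})>0\}$ is empty for every $x$ and every $r$, hence lies in no essential idempotent of $\beta\N$.

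Next I would construct the weights. Fix a super-rapidly increasing sequence $(k_r)$, say $k_r=2^{2^r}$, and put $w_r=r$, $g_r=2^r$, $\ell_r=\lceil\log_2 g_r\rceil=r$. Define $P$ to equal $1$ everywhere except on an ``active interval'' around each point $jk_r$ ($1\le j\le r$, $r\ge1$): there $P$ climbs from $1$ to $g_r$ over $\ell_r$ steps (weights in $[1,2]$), stays equal to $g_r$ on $[jk_r-w_r,jk_r+w_r]$, then descends back to $1$ (weights in $[1/2,1]$). Because $(k_r)$ grows so fast, all active intervals are pairwise disjoint, so this determines a weight sequence with values in $[1/2,2]$. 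Put $S^{*}=\{n:P_n>1\}$: it is the union of the active intervals, and, crucially, $S^{*}$ is \emph{not syndetic}, since the gaps between consecutive active intervals — the within-block gaps ($\approx k_r$) and the between-block gaps ($\approx k_{r+1}$) — tend to infinity. Consequently $\{n:P_n>c\}\subseteq S^{*}$ is not syndetic for any constant $c\ge1$.

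Finally I would verify both properties. \emph{Topological multiple recurrence:} given $r,N,M$, choose $r'$ with $r'\ge r$, $w_{r'}\ge N$, $g_{r'}>M$ and $k_{r'}>N$; then $k=k_{r'}$ meets the condition of the first reduction, as $[mk_{r'},mk_{r'}+N]\subseteq[mk_{r'}-w_{r'},mk_{r'}+w_{r'}]$ and $P=g_{r'}>M$ there, for $m=1,\dots,r\le r'$. \emph{Failure of $\p_{\overline{\mathcal{BD}}}$:} take $U=B(e_0,\eps)$ with $\eps<1/2$ and fix any $x\in X$; put $A_x=\{a:B_w^ax\in U\}$. If $a,a'\in A_x$ with $a<a'$ and $i=a'-a$, then the coordinate formula $(B_w^{b}x)_\ell=x_{b+\ell}P_{b+\ell}/P_\ell$ gives $|x_{a'}|P_{a'}>1-\eps$ (from $|(B_w^{a'}x)_0-1|<\eps$) and $|(B_w^{a}x)_i|=|x_{a'}|P_{a'}/P_i>(1-\eps)/P_i$, which must be $<\eps$; hence $P_i>(1-\eps)/\eps>1$. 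Thus $(A_x-A_x)\cap\N\subseteq\{i:P_i>(1-\eps)/\eps\}$, a non-syndetic set. If $\overline{Bd}(A_x)>0$ then $A_x-A_x$ would be syndetic (the classical fact that a set of positive upper Banach density has a syndetic difference set, provable via a Furstenberg-correspondence invariant measure and Khintchine recurrence), a contradiction; so $\overline{Bd}(A_x)=0$. As $x$ was arbitrary, $B_w$ lacks $\p_{\overline{\mathcal{BD}}}$, hence is not topologically $\E$-recurrent, while it is topologically multiply recurrent.

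The main obstacle is the genuine tension between the two demands: topological multiple recurrence forces the partial products $P_n$ to become large on configurations $\{[mk,mk+N]:m\le r\}$ of arbitrarily large length $r$ and width $N$, so $\{n:P_n>c\}$ inevitably has full upper Banach density (it contains arbitrarily long intervals); yet the non-recurrence argument needs $\{n:P_n>c\}$ to be \emph{non-syndetic}. These are reconciled by placing the required configurations at the super-rapidly-growing scales $k_r$, which keeps arbitrarily large gaps between the ``large'' intervals even as their widths grow — and then the classical syndeticity of difference sets of positive-density sets closes the loop. The only points requiring (routine) care are the bookkeeping that the active intervals are disjoint with weights in $[1/2,2]$, and the standard coordinate computation underlying the first reduction.
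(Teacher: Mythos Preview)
Your proof is correct and follows essentially the same strategy as the paper: construct a unilateral weighted backward shift whose partial products $P_n$ become large along enough arithmetic-progression-type configurations to force topological multiple recurrence, yet for which $\{n:P_n>c\}$ is non-syndetic, so that the classical syndeticity of difference sets of positive upper Banach density sets forces failure of property $\p_{\overline{\mathcal{BD}}}$ (hence of $\E$-recurrence, via the easy direction of Theorem~\ref{theor.recurrence}). The paper packages the second step through Remark~\ref{remark.propPBDImpliesSyndetic} ($\p_{\overline{\mathcal{BD}}}\Rightarrow$ syndetic-transitive) together with Proposition~\ref{charact.Bw.syndetic}, whereas you carry out the difference-set computation directly for the ball $B(e_0,\eps)$; your weight construction with $k_r=2^{2^r}$ is also tidier than the paper's explicit inductive one, but the underlying idea is identical.
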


 In order to state our main result, we need the notion of limit along a collection of sets.
 
 \begin{definition} (Limit along a filter)\ \
 
  Given a filter $\F$ on $\N$,  $\F-\lim_n T^n(x):=y$ if and only if for every open neighbourhood $V$ of $y$, the set $\{n\geq 0: T^n(x)\in V\}\in \mathscr{F}$.
 \label{def.F-limit}
 \end{definition}
 
 Denote $\mathcal{\underline{BD}}_1$ the collection of sets with lower Banach density 1, where the \emph{lower Banach density} of a set $A\subseteq \N$ is defined in the same way as the upper Banach density taking limit inferior instead of limit superior in condition (\ref{def.UpperBanachDens}). We remark that $\mathcal{\underline{BD}}_1$ is a filter, this is well-known and discussed in detail in the proof of  Theorem \ref{theor.recurrence} below. We state our main result:
  \begin{theorem}
   \label{theor.recurrence} 
     Let $(\lambda_n)_n$ be a sequence of non-zero complex numbers and let $p\in\E$ such that there exists $A\in p$ for which
     \begin{equation}
    \mathcal{\underline{BD}}_1-\lim_n\Big|\frac{\lambda_n}{\lambda_{n+k}}\Big|=1, ~ \forall k\in A.
     \label{equat3'}
     \end{equation}
   Then the family $(\lambda_nT^n)_n$ acting on  $X$ satisfies the property $\p_{\mathcal{\overline{BD}}}$ if and only if  $T$ is topologically $\E$-recurrent with respect to $(\lambda_n)_n$.
    \end{theorem}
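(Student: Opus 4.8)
The plan is to prove the two implications separately. The reverse one is formal and does not even use (\ref{equat3'}): let $p\in\E$ be the idempotent witnessing that $T$ is topologically $\E$-recurrent with respect to $(\lambda_n)_n$, let $U$ be non-empty open, take the associated $x\in X$, and specialize the defining condition to $r=0$. Since $\bigcap_{j=0}^{0}T^{-jk}(U)=U$ does not depend on $k$, the set $\{k\in\N:\overline{Bd}(\{a\in\N:\lambda_aT^ax\in U\})>0\}$ is either $\N$ or $\emptyset$, and being a member of the ultrafilter $p$ it equals $\N$; hence $\{a:\lambda_aT^ax\in U\}\in\mathcal{\overline{BD}}$. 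As $U$ was arbitrary, $(\lambda_nT^n)_n$ has property $\p_{\mathcal{\overline{BD}}}$.

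Conversely, suppose $(\lambda_nT^n)_n$ has $\p_{\mathcal{\overline{BD}}}$. Fix a non-empty open $U$ and shrink it to an open ball $V$ of radius $\eps_0$ about a point $u_0$ with the ball of radius $3\eps_0$ about $u_0$ still inside $U$; fix $\delta>0$ such that $zV\subseteq U$ for every $z\in\C$ with $\abs{z-1}<\delta$. Property $\p_{\mathcal{\overline{BD}}}$ applied to $V$ produces $x\in X$ with $B:=\{n\in\N:\lambda_nT^nx\in V\}$ of positive upper Banach density. Since the weights are complex I would first confine their arguments: partition $\N$ into finitely many classes according to which arc of angular length $<\delta/2$ contains $\arg\lambda_n$, and, using subadditivity of $\overline{Bd}$ over finite partitions, pick a class $C_0$ for which $B^{*}:=B\cap C_0$ still has positive upper Banach density. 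Then whenever $a$ and $a+jk$ both lie in $B^{*}$, the arguments of $\lambda_a$ and $\lambda_{a+jk}$ differ by less than $\delta/2$.

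Now fix $r\in\N$ and bring in (\ref{equat3'}). Choose $\eps\in(0,1)$ small enough that $(1-\eps)^{r}>1-\delta/2$ and $(1+\eps)^{r}<1+\delta/2$. For $k\in A$ the defining property of the $\mathcal{\underline{BD}}_1$-limit says that the complement of $\{n:\abs{\lambda_n/\lambda_{n+k}}\in(1-\eps,1+\eps)\}$ has upper Banach density $0$; hence $N_{k,\eps}:=\bigcup_{i=0}^{r-1}\bigl(\{n:\abs{\lambda_n/\lambda_{n+k}}\notin(1-\eps,1+\eps)\}-ik\bigr)$, a finite union of translates of density-zero sets, still has upper Banach density $0$, and for $a\notin N_{k,\eps}$ and $0\le j\le r$ the telescoped product $\abs{\lambda_a/\lambda_{a+jk}}=\prod_{i=0}^{j-1}\abs{\lambda_{a+ik}/\lambda_{a+(i+1)k}}$ lies in $((1-\eps)^{r},(1+\eps)^{r})$. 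Consequently, if $a\in B^{*}\cap(B^{*}-k)\cap\cdots\cap(B^{*}-rk)$ and $a\notin N_{k,\eps}$, then $\lambda_a/\lambda_{a+jk}$ has modulus in $(1-\delta/2,1+\delta/2)$ and argument of absolute value $<\delta/2$, so $\abs{\lambda_a/\lambda_{a+jk}-1}<\delta$; since $\lambda_{a+jk}T^{a+jk}x\in V$ and $\lambda_aT^{a+jk}x=(\lambda_a/\lambda_{a+jk})\,\lambda_{a+jk}T^{a+jk}x$, this gives $\lambda_aT^{a+jk}x\in U$ for every $j=0,\dots,r$, that is, $\lambda_aT^ax\in\bigcap_{j=0}^{r}T^{-jk}(U)$. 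Discarding the density-zero set $N_{k,\eps}$ does not affect upper Banach density, so for every $k\in A$,
\[
\overline{Bd}\Bigl(\bigl\{a\in\N:\lambda_aT^ax\in\bigcap_{j=0}^{r}T^{-jk}(U)\bigr\}\Bigr)\ \geq\ \overline{Bd}\bigl(B^{*}\cap(B^{*}-k)\cap\cdots\cap(B^{*}-rk)\bigr).
\]
Here the deep input enters: since $\overline{Bd}(B^{*})>0$, the Bergelson--McCutcheon theorem --- the $\E^{*}$-version of Szemer\'{e}di's theorem --- guarantees that $G_r:=\{k:\overline{Bd}(B^{*}\cap(B^{*}-k)\cap\cdots\cap(B^{*}-rk))>0\}$ meets every member of $\E$, i.e.\ $G_r\in\E^{*}$. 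By Lemma \ref{LemmaBeDo}, $\E^{*}$ is the intersection of all essential idempotents, so $G_r\in p$; as $A\in p$ and $p$ is a filter, $A\cap G_r\in p$, and the displayed inequality shows $A\cap G_r\subseteq\{k:\overline{Bd}(\{a:\lambda_aT^ax\in\bigcap_{j=0}^{r}T^{-jk}(U)\})>0\}$, which therefore belongs to $p$. Since $x$ depends only on $U$ (the auxiliary $\eps$ affecting only the discarded set) and $r$ was arbitrary, $T$ is topologically $\E$-recurrent with respect to $(\lambda_n)_n$, with $p$ itself as witness.

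The delicate point --- and the reason a \emph{deep} theorem is needed --- is the step just used: one must extract from the Bergelson--McCutcheon machinery the sharp conclusion that, for a set $C$ with $\overline{Bd}(C)>0$, the set of $k$ with $\overline{Bd}(C\cap(C-k)\cap\cdots\cap(C-rk))>0$ is a $\E^{*}$-set, since only in this strong form is it automatically a member of the prescribed idempotent $p$ and does $A\cap G_r$ stay in $p$. The remaining ingredients --- the reduction to a ball, the argument-restriction device handling the complex weights, and the telescoping estimate coming from (\ref{equat3'}) --- are routine, using only subadditivity and translation-invariance of upper Banach density together with the fact, recalled just before the statement, that $\mathcal{\underline{BD}}_1$ is a filter.
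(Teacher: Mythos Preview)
Your argument is correct and shares the paper's backbone: apply the Bergelson--McCutcheon $\E^*$-Szemer\'{e}di theorem to the positive-$\overline{Bd}$ return set, intersect the resulting $\E^*$-set with $A\in p$, and for each such $k$ control the ratios $\lambda_a/\lambda_{a+jk}$ using the $\underline{\mathcal{BD}}_1$-limit hypothesis. Two technical choices differ from the paper and are worth noting. First, the paper disposes of the complex phases at the outset by invoking (the $\overline{Bd}$-analogue of) Lemma~3.7 of \cite{Costakis} to reduce to positive $(\lambda_n)$; you instead keep the complex weights and partition $\N$ into finitely many argument-classes, passing to a class $C_0$ with $B^*=B\cap C_0$ still of positive upper Banach density, so that membership of $a$ and $a+jk$ in $B^*$ pins down $\arg(\lambda_a/\lambda_{a+jk})$. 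Second, once $k\in A$ is fixed, the paper combines the positive-density set $M_{k,r}$ with the $\underline{\mathcal{BD}}_1$-large set $I_k$ by placing $M_{k,r}$ in an auxiliary ultrafilter $\widetilde p\in\mathscr D$ (its Fact~2 together with Lemma~2.3 of \cite{Hindman}); you obtain the same conclusion more directly by subtracting the null set $N_{k,\eps}$ and using subadditivity of $\overline{Bd}$. Both routes work; yours is slightly more self-contained, while the paper's reduction to positive weights sidesteps the argument-partition device entirely. One cosmetic remark: in the easy direction you specialize to $r=0$, whereas the paper (taking $\N=\{1,2,\dots\}$) uses $r=1$; either choice gives the conclusion immediately.
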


 Thus we have the following diagram, where each solid arrow indicates implication, and the dashed arrow indicates that implication fails to hold.

 
\begin{center}
\begin{tikzpicture}[>=latex,text height=1.5ex,text depth=0.25ex]
  \matrix[row sep=0.5cm,column sep=0.5cm] {
    \node (u1) {$T$ has $\p_{\mathcal{\overline{AD}}}$}; &
    
        \node (u2) {$T$ multiply recurrent}; &
         &
    \\
        \\
    \node (u3) {$T$ has $\p_{\mathcal{\overline{BD}}}$}; &
        \node (u4) {$T$ is $\E$-recurrent}; &
            \\
        };
  
    \path[->]
    
   (u1) edge node [above]  {Th. \ref{CostPa}} (u2)
        (u1) edge (u3)  
    (u4) edge (u2)
    (u3) edge (u4)
    (u4) edge node [above]  {Th. \ref{theor.recurrence}} (u3)      
   (u2) edge [bend right=25, dashed]  (u4)

   ;                      
 \end{tikzpicture}    
 \end{center}

 First, observe from this diagram that the converse of Theorem \ref{CostPa} can not be obtained, i.e. there exists a multiply recurrent operator which does not have $\p_{\mathcal{\overline{AD}}}$. On the other hand, Theorem \ref{theor.recurrence} is in fact a generalization of Theorem \ref{CostPa}, as can be deduced from the diagram.
 
    Finally, we obtain a characterization of reiteratively hypercyclic operators, which gives us much more information about the behavior of the return sets  of reiteratively hypercyclic operators.
  \begin{theorem}
  \label{charact.rhc.Op}
  An operator $T\in \Lin (X)$ is reiteratively hypercyclic if and only if there exists some $x\in X$ such that for any non-empty open set $U$ in $X$ and any $r\in \N$, it holds
  \[
  \Big\{k\in \N: \overline{Bd}\Big(a\in \N: T^ax\in T^{-k}U\cap \cdots\cap T^{-rk}U\cap U\Big)>0\Big\}\in \E^*.
  \]
  \end{theorem}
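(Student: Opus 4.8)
The plan is to reduce the statement to the following purely combinatorial fact, which is where the ergodic Ramsey input enters: \emph{if $A\subseteq\N$ has positive upper Banach density, then for every $r\in\N$ the set}
\[
B_r(A):=\Big\{k\in\N:\ \overline{Bd}\big(A\cap(A-k)\cap\cdots\cap(A-rk)\big)>0\Big\}
\]
\emph{belongs to $\E^*$.} Granting this, the two implications are short. For the ``if'' direction, suppose $x\in X$ satisfies the displayed condition of the theorem. Since $\E^*$ is an intersection of free ultrafilters, each of its members is infinite, in particular non-empty; so for every non-empty open $U$, applying the hypothesis with $r=1$ produces a $k\in\N$ with $\overline{Bd}\big(\{a:T^ax\in T^{-k}U\cap U\}\big)>0$, and since $\{a:T^ax\in T^{-k}U\cap U\}\subseteq N(x,U)$ this forces $N(x,U)\in\mathcal{\overline{BD}}$. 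Thus the single vector $x$ witnesses that $T$ is reiteratively hypercyclic.

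For the ``only if'' direction, let $x$ be a reiteratively hypercyclic vector, so that $A_U:=N(x,U)=\{a\geq 0:T^ax\in U\}$ has positive upper Banach density for \emph{every} non-empty open $U$. Fixing $U$ and $r$, the condition $T^ax\in\bigcap_{j=0}^r T^{-jk}U$ is equivalent to $a+jk\in A_U$ for all $j=0,\dots,r$, whence
\[
\Big\{a\in\N:\ T^ax\in T^{-k}U\cap\cdots\cap T^{-rk}U\cap U\Big\}=\bigcap_{j=0}^r\big(A_U-jk\big),
\]
so the set occurring in the theorem is exactly $B_r(A_U)$, which lies in $\E^*$ by the combinatorial fact. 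As the same $x$ works for all $U$ and all $r$, this gives the desired conclusion.

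It remains to prove the combinatorial fact, which is the substantive step. We show $B_r(A)$ is an $IP^*$-set, i.e. that it meets $FS(\{n_i\}_{i\in\N})$ for every sequence $(n_i)_i$ in $\N$; this suffices because every $IP^*$-set is a member of every idempotent ultrafilter of $\beta\N$, in particular of every essential idempotent, so $IP^*\subseteq\E^*$ (see \cite{Hindman1}). Fix such an IP-set. By Furstenberg's correspondence principle, $\overline{Bd}(A)>0$ yields an invertible measure preserving system $(Y,\mathcal B,\nu,S)$ and a set $\widetilde A\in\mathcal B$ with $\nu(\widetilde A)=\overline{Bd}(A)>0$ such that
\[
\overline{Bd}\big(A\cap(A-k)\cap\cdots\cap(A-rk)\big)\ \geq\ \nu\big(\widetilde A\cap S^{-k}\widetilde A\cap\cdots\cap S^{-rk}\widetilde A\big)\qquad\text{for all }k\in\N.
\]
Applying the Bergelson--McCutcheon theorem \cite{Bergelson0} (an IP-version of Szemer\'edi's theorem) to the commuting transformations $S,S^2,\dots,S^r$ and the set $\widetilde A$, there is a finite non-empty $\alpha$ for which $k:=\sum_{i\in\alpha}n_i$ satisfies $\nu\big(\widetilde A\cap S^{-k}\widetilde A\cap\cdots\cap S^{-rk}\widetilde A\big)>0$; by the displayed inequality $k\in B_r(A)$, and of course $k\in FS(\{n_i\})$. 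Hence $B_r(A)$ is an $IP^*$-set and therefore lies in $\E^*$.

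The hard part is this last step: one needs a Szemer\'edi-type theorem strong enough to place the set of good return times \emph{in the dual of the essential idempotents}, not merely to show it is non-empty, syndetic, or of positive density. Two points need care: applying the correspondence principle so that the density comparison holds \emph{simultaneously} along the whole progression $a,a+k,\dots,a+rk$, and verifying that the ergodic IP-Szemer\'edi conclusion genuinely lands in $FS$ of the prescribed IP-set (yielding an $\E^*$, rather than a weaker, membership). Alternatively one could route the argument through Theorem \ref{theor.recurrence} in the unweighted case $(\lambda_n)_n\equiv 1$, but the direct argument above makes clear why one obtains a single reiteratively hypercyclic vector and membership in $\E^*$ rather than in some unspecified $p\in\E$.
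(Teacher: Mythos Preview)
Your proof is correct and rests on the same substantive input as the paper's: the Bergelson--McCutcheon polynomial/IP recurrence theorem applied to the return set $A_U=N(x,U)$, together with the elementary observation that $\{a:T^ax\in\bigcap_{j=0}^r T^{-jk}U\}=\bigcap_{j=0}^r(A_U-jk)$.

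The packaging differs slightly. The paper does not re-derive the combinatorial fact; it simply specializes the proof of Theorem~\ref{theor.recurrence} to $(\lambda_n)_n\equiv 1$ (so $A=\N$ and all the auxiliary sets $I_{j,k}$ are trivial), and in that proof the key step~\eqref{equat4} is obtained by a direct citation of Theorem~1.25 of \cite{Bergelson0}, which already places the set of good $k$ in $\E^*$. You instead unpack that citation: Furstenberg correspondence followed by the IP Szemer\'edi theorem yields $B_r(A)\in IP^*$, and then $IP^*\subseteq\E^*$. Your route proves a nominally stronger conclusion ($IP^*$ rather than $\E^*$) at the cost of re-proving a special case of what \cite{Bergelson0} states combinatorially; the paper's route is shorter given the reference and also shows how the weighted version and the reiteratively hypercyclic characterization share a single proof.
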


\section{Operators satisfying property $\p_{\mathcal{\overline{BD}}}$}

\subsection{Proof of theorem \ref{theor.recurrence}}
   
   Suppose $T$ is topologically $\E$-recurrent with respect to $(\lambda_n)_n$, then there exists some $p\in \E$ such that for any non-empty open set $U$ in $X$, there exists $x\in X$ such that $\{k\in\N: \overline{Bd}(a\in\N: \lambda_aT^ax\in U\cap T^{-k}U)>0 \}\in p$. In particular, we can pick some $k$ such that $\overline{Bd}(a\in\N: \lambda_aT^ax\in U\cap T^{-k}U)>0$. Hence, $\overline{Bd}(a\in\N: \lambda_aT^ax\in U)>0$ and $(\lambda_nT^n)_n$ has property $\p_{\mathcal{\overline{BD}}}$.

 Conversely, let $(\lambda_n)_n$ a sequence of non-zero complex numbers as in the statement of the theorem, then the family $(\lambda_nT^n)_n$ has the property $\p_{\mathcal{\overline{BD}}}$ if and only if $(|\lambda_n|T^n)_n$ has the property $\p_{\mathcal{\overline{BD}}}$, and the proof follows the same lines as Lemma 3.7 \cite{Costakis}, replacing $\overline{Bd}$ instead of $\overline{d}$. So, we may assume that $(\lambda_n)_n$ is a sequence of positive numbers such that for some $p\in\E$ and some $A\in p$, it is satisfied $\lim_n \lambda_n / \lambda_{n+k}=1$ for every $k\in A$.
 
 Let $U$ a non-empty open set in $X$ and $r\in \mathbb{N}$, then there exists $y\in U$ and a positive number $\epsilon$ such that $B(y; \epsilon)\subseteq U$, where $B(y; \epsilon)$ denote the open ball centered at $y$ with radius $\epsilon$. Hence by the property $\mathcal{P}_{\mathcal{\overline{BD}}}$ of the family $(\lambda_nT^n)_n$,  there exists $x\in X$ such that 
  \[
  F=\{n\in \mathbb{N}:\lambda_n T^n x\in B(y, \epsilon /2)\}
  \]
   has positive upper Banach density. Consider the family of polynomials $g_1(k)=-k,...,g_r(k)=-rk$, hence $g_1,...g_r\in \mathcal{G}_a$ the group of admissible generalized polynomials, see page 10 \cite{Bergelson0} for the definition. Now by Theorem 1.25 \cite{Bergelson0} we have that
 
 
  \begin{equation}
 W:= \Big\{k\in\mathbb{N}: \overline{Bd}\Big(F\cap (F-k)\cap...\cap (F-rk)\Big)>0\Big\}\in \E^*.
 \label{equat4}
  \end{equation}
 Hence, $W\in q, \forall q \in\E$.
 
 On the other hand, recall that (\ref{equat3'}) holds for some $p\in\E$ and some $A\in p$,  hence,

 
 \begin{equation}
 W\cap A\in p.
  \label{equat5}
  \end{equation}
  
  Fix $k\in W\cap A$. Denote
  \[
  M_{k, r}:=F\cap (F-k)\cap...\cap (F-rk)=\{a: a, a+k,...,a+rk \in F\}.
  \]

 As in the proof of Theorem 3.8 \cite{Costakis}, we agree the following notation 
 \begin{equation}
 \label{eq.def.u}
 u:=\lambda_a T^a x\in B(y; \epsilon/2)
 \end{equation}
 \[
  u_j:=\lambda_{a+jk}T^{a+jk} x= \frac {\lambda_{a+jk}}{\lambda _a} T^{jk} u\in B(y; \epsilon/2)
 \]
  for every $a\in M_{k, r}$ and $j=1\dots r$. Let $M>0$ such that $\norm{u_j}\leq M$ for any $j=1\dots r$.
 
  On the other hand, 
   \[
    || T^{jk} u-u_j||=\Big\|  \frac{\lambda _a}{\lambda_{a+jk}} u_j - u_j\Big\|= \Big | \frac{\lambda_{a}}{\lambda_{a+jk}}-1\Big | \|u_j\|
   \]
    Now, set 
   \[
   I_{j,k}=\Big\{a\in\N: \Big|\frac{\lambda_a}{\lambda_{a+jk}}-1\Big|<\epsilon/2M\Big\}.
   \]
    It is a well-known fact and easy to verify that $\overline{\mathcal{BD}}^*=\underline{\mathcal{BD}}_1$. Hence, 
   \begin{equation}
   I_{j,k}\in\mathcal{\overline{BD}}^*, 
   \label{equat6}
   \end{equation}
   for every $j=1\dots r$, because by hypothesis $k\in W\cap A$ implies 
   
\[
   \mathcal{\overline{BD}}^*-\lim_{n}\frac{\lambda_n}{\lambda_{n+k}}=1,
   \]
  and at the same time, the following is true.
  
    \textbf{Fact 1:}
    $\mathcal{\overline{BD}}^*-\lim_{n}\frac{\lambda_n}{\lambda_{n+k}}=1 $ implies $\mathcal{\overline{BD}}^*-\lim_{n}\frac{\lambda_n}{\lambda_{n+jk}}=1$,
 
   for any $j=1\dots r$.
   
   In fact, 
   \[
   \lambda_a/\lambda_{a+jk}= \lambda_a/\lambda_{a+k} \cdot \lambda_{a+k}/\lambda_{a+2k}\cdot...\cdot \lambda_{a+(j-1)k}/\lambda_{a+jk}.
   \]
    Now, let $V$ an open neighbourhood of $1$, then one can find $V_1, V_2, \dots , V_j$ neighbourhoods of $1$ such that 
   \[
   \Big\{n\geq 0: \frac{\lambda_n}{\lambda_{n+k}}\in V_1\Big\} \cap \Big\{n\geq 0: \frac{\lambda_{n+k}}{\lambda_{n+2k}}\in V_2\Big\}\cap...\cap \Big\{n\geq 0: \frac{\lambda_{n+(j-1)k}}{\lambda_{n+jk}}\in V_j\Big\}\subseteq
   \]   
   
   \[
   \subseteq \Big\{n\geq 0: \frac{\lambda_n}{\lambda_{n+k}}\cdot \frac{\lambda_{n+k}}{\lambda_{n+2k}}\cdot ...\cdot \frac{\lambda_{n+(j-1)k}}{\lambda_{n+jk}}\in V\Big\}=\Big\{n\geq 0: \frac{\lambda_n}{\lambda_{n+jk}}\in V\Big\}.
   \]
   
   By hypothesis, 
   \[
   \Big\{n\geq 0: \lambda_{n+(t-1)k} /\lambda_{n+tk}\in V_t\Big\}\in \mathcal{\overline{BD}}^*,
   \]
    for $t=1\dots j$. Hence because $\mathcal{\overline{BD}}^*$ is a filter (see Fact 2 below), we have 
   \[
   \bigcap_{t=1}^j\Big\{n\geq 0: \frac{\lambda_{n+(t-1)k}}{\lambda_{n+tk}}\in V_t\Big\}\in \mathcal{\overline{BD}}^*.
   \]
    Consequently, $\Big\{n\geq 0: \lambda_n/\lambda_{n+jk}\in V\Big\}\in \mathcal{\overline{BD}}^*$. Finally due to the arbitrariness of $V$, we conclude 
    \[
    \Big\{n\geq 0: \lambda_n/\lambda_{n+jk}\in V\Big\}\in \mathcal{\overline{BD}}^*,
    \]
     for every $V$ open neighbourhood of $1$ and any $j=1\dots r$, which concludes the proof of Fact 1.

    \textbf{Fact 2:}
   The family $\mathcal{\overline{BD}}^*$ is a filter. In fact, it can be written as intersection of ultrafilters, i.e.
   \[
\mathcal{\overline{BD}}^*=\bigcap_{p\in \mathscr{D}}p^*=\bigcap_{p\in \mathscr{D}}p=\{A\subseteq \N: A\in p, \forall p \in \mathscr{D}\},
   \]
   where $\mathscr{D}=\{p\in \beta \N: \overline{Bd}(A)>0, \forall A\in p\}$. 
   
   First, by Lemma \ref{LemmaBeDo} if $\mathscr{F}$ is an ultrafilter, then $\mathscr{F}=\mathscr{F^*}$. Second, obviously
   \[
   \bigcup_{p\in \mathscr{D}}p=\{A\subseteq \N: A\in p, p \in \mathscr{D}\}\subseteq \mathcal{\overline{BD}}.
   \]
    Conversely, Let $A\in \mathcal{\overline{BD}}$, then by Lemma 2.3 \cite{Hindman} there exists $p\in \mathscr{D}$ such that $A\in p$, then $\mathcal{\overline{BD}}=\bigcup_{p\in \mathscr{D}}p$ and consequently $\mathcal{\overline{BD}}^*=\bigcap_{p\in \mathscr{D}}p^*$, which concludes the proof of Fact 2.
    
    Hence by (\ref{equat6}), we have that $I_k:=\cap_{j=1}^rI_{j,k}\in\mathcal{\overline{BD}}^*=\mathscr{D}^*$, i.e.
     \begin{equation}
   I_k\in q , \forall q\in \mathscr{D}.
   \label{equat8}
   \end{equation}
   
   Now, $\overline{Bd}(M_{k, r})>0$ by (\ref{equat4}). Hence by Lemma 2.3 \cite{Hindman} there exists $\widetilde{p}\in\mathscr{D}$ such that 
  \begin{equation}
   M_{k, r}\in \widetilde{p}.
   \label{equat9}
   \end{equation}
   
  By (\ref{equat8}) and (\ref{equat9}) it results $A_{k, r}:=I_k\cap M_{k, r}\in\widetilde{p}$. Hence, 
      \begin{equation}
   \overline{Bd}(A_{k, r})>0
   \label{equat10}
   \end{equation}
   
   and 
   \[
   A_{k, r}\subseteq \Big\{a\in \N: \norm{T^{jk}u-u_j}<\epsilon/2, \norm{u_j-y}<\epsilon/2,  j=1\dots r\Big\},
   \]
   then
   \begin{equation}
   \label{eq.Akr}
    A_{k, r}\subseteq \Big\{a\in \N: \norm{T^{jk}u-y}<\epsilon, j=1\dots r\Big\}.
   \end{equation}

   Hence  by (\ref{eq.def.u}) and (\ref{eq.Akr})  we obtain
   \[
   u, T^k(u), ..., T^{rk}(u)\in U,
   \]
   for every $a\in A_{k, r}$.

   Now, by (\ref{equat5}) and (\ref{equat10}) we have,
   \[
   \Big\{k\in\N: \overline{Bd}\Big(a\in\N: \lambda_aT^ax\in\cap_{j=0}^r T^{-jk}(U)\Big)>0\Big\}\in p
   \]
   
   and $T$ is topologically $\E$-recurrent with respect to $\lambda=(\lambda_n)_n$.     
     \begin{remark}
     $(a)$ Observe that in the sufficiency part of the proof of Theorem \ref{theor.recurrence}, condition (\ref{equat3'}) is superfluous.
     
     $(b)$ In Example 3.13 \cite{Costakis}, Costakis and Parissis showed that the hypothesis $\lim_{n\to \infty} \abs{\lambda_n}/\abs{\lambda_{n+\tau}}=1$, for some positive integer $\tau$, in Theorem \ref{CostPa}, cannot be replaced by the hypothesis $\lim_{n\to\infty, n\in A} \abs{\lambda_n}/\abs{\lambda_{n+\tau}}=1$ for some positive integer $\tau$ and $A\subset \N$ with $\underline{Bd}(A)=1$. We remark that there exists a linear operator $T$ on $c_0(\Z_+)$ such that for any finite set $A\subset \N$, there exists a sequence of complex numbers $(\lambda_n)_n$ such that $\underline{Bd}_1$-$\lim_n \abs{\lambda_n}/\abs{\lambda_{n+k}}=1$, for every $k\in A, (\lambda_nT^n)_n$ satisfies property $\p_{\mathcal{\overline{BD}}}$ and $T$ is not recurrent. We can use Example 3.13 \cite{Costakis} of Costakis and Parissis to show that the hypothesis of Theorem \ref{theor.recurrence} cannot be weakened in this sense. We give the details for the sake of completeness. Let $B$ the unilateral backward shift on $c_0(\Z_+)$, defined by: $B e_n:=e_{n-1}, n\geq 1$,  $Be_0:=0$, where $(e_n)_{n\in \Z_+}$ denotes the canonical basis of $c_0(\Z_+)$. Suppose $A\subset \N$ is a finite set and $M=\max_{a\in A}a$. Define $\lambda_n=2^{2^r}$ if $n\in [2^{r-1}, 2^r-M]$ and $S=\cup_{r=M+1}^\infty [2^{r-1}, 2^r-2M]$. Note that $S\subseteq \{n\in \N: \abs{\lambda_n}/\abs{\lambda_{n+k}}=1, \forall k\in A\}$ and $\underline{Bd}(S)=1$. Hence, 
     \[
     \underline{Bd}_1-lim_n \abs{\frac{\lambda_n}{\lambda_{n+k}}}=1, ~ \forall k\in A.
     \]
     Using the frequent universality criterion from \cite{BoGr1} and \cite{BoGr2} it is not difficult to see that $(\lambda_nB^n)_n$ is frequently universal, i.e. there exists $x\in X$ such that for every non-empty open set $U\subset X$, the set $\{n\in \N: \lambda_nB^nx\in U\}$ has positive lower density. In particular, $(\lambda_nB^n)_n$ has property $\p_{\mathcal{\overline{BD}}}$ and $B$ is not recurrent.
     
     However, we cannot say anything if we replace the hypothesis 
     \[
     \underline{Bd}_1-\lim_n \abs{\lambda_n}/\abs{\lambda_{n+k}}=1, ~ \forall k\in A,
     \]
      for some $A\in p$ and $p\in \E$ in Theorem \ref{theor.recurrence}, by $\underline{Bd}_1$-$\lim_n \abs{\lambda_n}/\abs{\lambda_{n+k}}=1$, for every $k\in A$ with $\underline{Bd}(A)=0$.
     \end{remark}
        
     \subsection{Adjoint of multiplication operators}
      An easy application of Theorem \ref{theor.recurrence} can be seen in the frame of adjoint of multiplication operators (see \cite{Grosse1}, \cite{Bayart}) for an introduction. 
          
    Fix a non-empty open connected set $\Omega\subset \C^n, n\in \N$, and $H$ a Hilbert space of holomorphic functions such that $H\neq \{0\}$ and  for every $z\in \Omega$, the point evaluation functionals $f\mapsto f(z), f\in H$, are bounded.
    
     Recall that every complex valued function $\phi: \Omega \to \C$ such that the pointwise product $\phi f\in H$, for every $f\in H$ is called a multiplier of $H$, and defines a multiplication operator $M_\phi: H\to H$ defined as 
     \[
     M_\phi(f)=\phi f, \quad f\in H.
     \]
     
     The following is an improvement of Proposition 6.1 \cite{Costakis}.
\begin{corollary}
\label{adjMulOp}
Suppose that every non-constant bounded holomorphic function $\phi$ on $\Omega$ is a multiplier of $H$ such that $\norm{M_\phi}=\norm{\phi}_\infty$. Then for each such $\phi$ the following are equivalent.

i) $M_\phi^*$ is topologically $\E$-recurrent

ii) $M_\phi^*$ is recurrent

iii) $M_\phi^*$ is frequently hypercyclic

iv) $M_\phi^*$ is hypercyclic

v) $\phi (\Omega)\cap \mathbb{T}\neq \emptyset$.
\end{corollary}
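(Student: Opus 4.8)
The plan is to establish the cycle of implications (i) $\Rightarrow$ (ii) $\Rightarrow$ (iv) $\Rightarrow$ (v) $\Rightarrow$ (iii) $\Rightarrow$ (i), using Theorem \ref{theor.recurrence} to close the loop at the end. The implication (i) $\Rightarrow$ (ii) is immediate from the remark made right after the definition of topological $\E$-recurrence: picking any $k$ in the (nonempty, since $p$ is an ultrafilter) witnessing set and any $a$ in the Banach-positive return set shows $U\cap T^{-k}U\neq\emptyset$, so $M_\phi^*$ is recurrent. The implication (iii) $\Rightarrow$ (iv) is trivial, and (iv) $\Rightarrow$ (ii) is trivial as well, so really the substantive work is in the equivalence of (ii), (iv), (v) and in the final return (v) $\Rightarrow$ (iii) $\Rightarrow$ (i).

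For the equivalences (ii) $\Leftrightarrow$ (iv) $\Leftrightarrow$ (v), I would invoke the known description of hypercyclicity and recurrence for adjoints of multipliers: the point evaluations $z\mapsto K_z$ (reproducing kernels) are eigenvectors of $M_\phi^*$ with eigenvalue $\overline{\phi(z)}$, and their span is dense in $H$ (since $H$ is a functional Hilbert space with bounded point evaluations and no common zero forces separation). Standard eigenvector/spanning arguments — exactly as in \cite{Costakis}, \cite{Bayart} — give that $M_\phi^*$ is hypercyclic iff $\phi(\Omega)\cap\mathbb{T}\neq\emptyset$ (the open connectedness of $\Omega$ and non-constancy of $\phi$ make $\phi(\Omega)$ open, so meeting $\mathbb{T}$ yields eigenvalues accumulating inside and outside the unit disc). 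The same reproducing-kernel picture shows that if $M_\phi^*$ is recurrent then $\phi(\Omega)$ must meet $\mathbb{T}$: otherwise either $\norm{M_\phi}=\norm{\phi}_\infty<1$, forcing $M_\phi^*$ to be a strict contraction with the origin as global attractor (no recurrence possible beyond the trivial fixed point), or $\phi(\Omega)$ lies strictly outside $\overline{\mathbb{D}}$ and a kernel-functional computation shows orbits escape to infinity; hence (ii) $\Rightarrow$ (v). Combined with the classical (v) $\Rightarrow$ (iv) $\Rightarrow$ (ii) this settles the middle block.

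It remains to upgrade (v) to (iii), i.e. from hypercyclicity to frequent hypercyclicity. Here I would apply the Frequent Hypercyclicity Criterion (or the eigenvalue criterion of Bayart–Grivaux): when $\phi(\Omega)$ meets $\mathbb{T}$, the set $\{z\in\Omega:\,|\phi(z)|=1\}$ contains a real-analytic piece of positive dimension, and the unimodular eigenvalues $\overline{\phi(z)}$ of $M_\phi^*$ are parametrized analytically over it, so one obtains a spanning family of unimodular eigenvectors supporting a continuous (indeed analytic) eigenvector field; this yields frequent hypercyclicity of $M_\phi^*$ — this is essentially Proposition 6.1 of \cite{Costakis}, which we are improving. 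Finally, for the new implication (iii) $\Rightarrow$ (i): a frequently hypercyclic operator has, for every nonempty open $U$, a vector $x$ with $N(x,U)$ of positive lower density, hence of positive upper Banach density by \eqref{eq.density}, so $(T^n)_n$ satisfies $\p_{\mathcal{\overline{BD}}}$; now apply Theorem \ref{theor.recurrence} with $\lambda_n\equiv 1$ (so that condition \eqref{equat3'} holds trivially, since the constant sequence gives ratio $1$ and any $p\in\E$ works, $A=\N\in p$) to conclude that $T=M_\phi^*$ is topologically $\E$-recurrent. The main obstacle is making the (iii) direction clean: one must confirm the Frequent Hypercyclicity Criterion genuinely applies in this several-variables holomorphic setting with only $\norm{M_\phi}=\norm{\phi}_\infty$ assumed, and one should double-check that Theorem \ref{theor.recurrence}'s hypothesis is vacuous for $\lambda_n\equiv1$ — both are routine but deserve a careful sentence, and the former is the only place where real content beyond \cite{Costakis} enters.
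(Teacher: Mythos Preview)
Your proposal is correct and follows essentially the same approach as the paper: the equivalence of (ii)--(v) is taken from Proposition~6.1 of \cite{Costakis} (which the paper simply cites rather than re-deriving as you do), (i) $\Rightarrow$ (ii) is the trivial observation you make, and the only new ingredient is (iii) $\Rightarrow$ (i) via Theorem~\ref{theor.recurrence} with $\lambda_n\equiv 1$. Your closing worry is slightly misplaced, though: the step (v) $\Rightarrow$ (iii) is already contained in Proposition~6.1 of \cite{Costakis}, so the sole content beyond \cite{Costakis} is the appeal to Theorem~\ref{theor.recurrence}, not the applicability of the Frequent Hypercyclicity Criterion.
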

\begin{proof}
By Proposition 6.1 \cite{Costakis}, conditions $ii)-v)$ are equivalent. Condition $iii)$ implies $i)$. Indeed, suppose $M_\phi^*$ is frequently hypercyclic, then obviously $M_\phi^*$ is reiteratively hypercyclic, hence topologically $\E$-recurrent by Theorem \ref{theor.recurrence}. Finally, $i)$ implies $ii)$ since every topologically $\E$-recurrent operator is clearly recurrent. 
\end{proof}
     
    \subsection{Weighted shifts satisfying property $\p_{\mathcal{\overline{BD}}}$}
    In this section we will prove Proposition \ref{top.multrec.butnot.Erec} and show another consequence of Theorem \ref{theor.recurrence}. 
    
    An important class of operators in linear dynamics is the weighted shifts.
    \begin{definition}(weighted backward shifts)

Each bilateral bounded weight $w=(w_k)_{k\in \Z}$, induces a \emph{bilateral weighted backward shift} $B_w$ on $X=c_0(\Z)$ or $l^p(\Z)$, given by $B_{w}e_k:=w_{k}e_{k-1}$, where $(e_k)_{k\in \Z}$ denotes the canonical basis of $X$.

   Similarly, each unilateral bounded weight $w=(w_n)_{n\in \Z_+}$ induces a \emph{unilateral weighted backward shift} $B_w$ on $X=c_0(\Z_+)$ or $l^p(\Z_+)$, given by $B_{w}e_n:=w_{n}e_{n-1}, n\geq 1$ with $B_{w}e_0:=0$, where $(e_n)_{n\in \Z_+}$ denotes the canonical basis of $X$.
\end{definition}

 \subsubsection{A multiply recurrent operator which is not $\E$-recurrent}
 
 Now we proceed to prove Proposition \ref{top.multrec.butnot.Erec} but first we need to point out the following:
  \begin{proposition}
   Let $B_w$ a weighted backward shift on $X=c_0(\Z_+)$ or $l^p(\Z_+), 1\leq p<\infty$. The following are equivalent:
  
  i) $B_w$ topologically multiply recurrent
  
  ii)   $\forall M>0, \exists m_0: \forall m>m_0, \exists n: \min_{1\leq l\leq m} \{|w_1w_2...w_{ln}|\}>M$.
  \label{proposition.remark}
  \end{proposition}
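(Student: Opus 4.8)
The plan is to characterize topological multiple recurrence of a unilateral weighted backward shift $B_w$ on $c_0(\Z_+)$ or $\ell^p(\Z_+)$ in terms of the growth of the partial products of the weights. The key observation is that for weighted backward shifts the dynamics is essentially determined by what happens on finitely many coordinates near the origin, since applying $B_w^k$ shifts everything down by $k$ and multiplies by a product of consecutive weights.

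First I would reduce the study of the condition $U\cap B_w^{-n}U\cap\cdots\cap B_w^{-rn}U\neq\emptyset$ to a concrete statement about basis vectors. Recall that $B_w^{\,j}e_i = w_i w_{i-1}\cdots w_{i-j+1}\, e_{i-j}$ for $j\le i$ and $B_w^{\,j}e_i=0$ for $j>i$; conversely, a preimage of $e_0$ under $B_w^{\,j}$ is $(w_1w_2\cdots w_j)^{-1}e_j$. So to have a point $x$ in a small neighbourhood $U$ of $e_0$ (say a ball $B(e_0;\eps)$) with $B_w^{\,ln}x$ also in $U$ for every $l=1,\dots,m$ (writing $m$ for what the theorem statement calls $r$), the natural candidate is the ``staircase'' vector $x = e_0 + \sum_{l=1}^m (w_1w_2\cdots w_{ln})^{-1}e_{ln}$, which satisfies $B_w^{\,ln}x = e_0 + (\text{tail terms coming from the higher steps})$. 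The tail terms have coefficients that are ratios of partial products, and one checks that $B_w^{\,ln}x$ lies in $B(e_0;\eps)$ provided all the relevant partial products $|w_1w_2\cdots w_{l'n}|$ for $l'\ge 1$ are large — quantitatively, larger than some $M=M(\eps)$. This gives the implication $(ii)\Rightarrow(i)$: given $U$ and $r=m$, shrink to a ball around a nonzero vector (by linearity and density of finite sequences it suffices to consider balls around scalar multiples of $e_0$), take $M$ accordingly, use $(ii)$ to get $n$ with $\min_{1\le l\le m}|w_1\cdots w_{ln}|>M$, and the staircase vector does the job.

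For the converse $(i)\Rightarrow(ii)$, I would argue contrapositively: if $(ii)$ fails, there is some $M_0>0$ such that for every $m_0$ there is $m>m_0$ for which every $n$ has some $l\le m$ with $|w_1\cdots w_{ln}|\le M_0$. Take $U$ to be a small ball around $e_0$ of radius $\eps$ depending on $M_0$. If $z\in U$ and $B_w^{\,ln}z\in U$ for all $l\le m$, then in particular the $0$-th coordinate of each $B_w^{\,ln}z$ is close to $1$; but that $0$-th coordinate equals $(w_1\cdots w_{ln})\cdot z_{ln}$, so $|z_{ln}|\ge c/|w_1\cdots w_{ln}|$ for a constant $c$ close to $1$. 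Whenever $|w_1\cdots w_{ln}|\le M_0$ this forces $|z_{ln}|\ge c/M_0$, a fixed positive amount. Now if $m$ is taken large enough (larger than, say, $(c/M_0)^{-p}$ in the $\ell^p$ case, or simply using that a $c_0$ sequence has only finitely many coordinates of size $\ge c/M_0$), one gets a contradiction with $z\in X$: there would have to be $m$-many coordinates $z_{l_1 n}, z_{l_2 n}, \dots$ — wait, they sit at the possibly-coinciding indices $l n$, but since the bad $l$ varies with $n$ we instead fix $m$ huge and note that for the single vector $z$ and the single chosen $n$, there is at least one bad $l\in\{1,\dots,m\}$, giving one large coordinate; to amplify this into a genuine obstruction one iterates over a sequence of scales $n$ and a sequence of values of $m\to\infty$, extracting infinitely many distinct indices where $z$ is bounded below, contradicting $z\in c_0$ or $\ell^p$. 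This requires a little care in the bookkeeping.

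The main obstacle I anticipate is exactly this last amplification step: a single failure of $(ii)$ at one value of $n$ only produces one large coordinate of the candidate orbit vector, which is not yet a contradiction, so one must leverage the fact that $(i)$ demands a point for \emph{every} $r$ and that the failure of $(ii)$ persists for arbitrarily large $m$, in order to manufacture a vector forced to have a non-vanishing subsequence of coordinates. Organizing this — choosing which neighbourhood $U$, which sequence $m_j\to\infty$, and tracking that the lower bounds $c/M_0$ occur at distinct coordinates — is where the proof needs to be written carefully; the product-of-weights estimates themselves ($B_w^{\,j}e_i$ and the ratios $\lvert w_1\cdots w_{l'n}/w_1\cdots w_{ln}\rvert$ for the tail of the staircase vector) are routine. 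I would also remark that this proposition is the standard criterion for topological multiple recurrence of backward shifts and then use it together with Theorem \ref{theor.recurrence} to build the example in Proposition \ref{top.multrec.butnot.Erec}: choose weights making $(ii)$ hold (so $B_w$ is multiply recurrent) while arranging, via the negation built into Theorem \ref{theor.recurrence} and a suitable choice of $(\lambda_n)$, that $(\lambda_n B_w^n)_n$ fails property $\p_{\mathcal{\overline{BD}}}$, hence $B_w$ is not topologically $\E$-recurrent.
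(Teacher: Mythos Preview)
Your approach is genuinely different from the paper's. The paper does not argue directly with coordinates at all: it first observes that $(ii)$ is equivalent to $\sup_{n}\min_{1\le l\le m}|w_1\cdots w_{ln}|=\infty$ for every $m$, then quotes Proposition~4.3 of B\`es--Peris \cite{BP} to identify this with hypercyclicity of $B_w\oplus B_w^2\oplus\cdots\oplus B_w^m$ on $X^m$ for every $m$, and finally invokes (the unilateral adaptation of) Proposition~5.3 of Costakis--Parissis \cite{Costakis}, which says the latter is equivalent to topological multiple recurrence of $B_w$. So the paper's proof is a short citation chain through disjoint hypercyclicity; yours is a self-contained coordinate argument, more elementary but longer.

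Your direct argument is essentially workable, but two points need correcting. First, the reduction ``it suffices to consider balls around scalar multiples of $e_0$'' is too strong: a general non-empty open set contains a ball around a finitely supported vector, not necessarily a multiple of $e_0$. Your staircase construction does adapt (take $n$ larger than the support and stack translates of the given finite vector), but this must be said. Second, your ``amplification'' worry in $(i)\Rightarrow(ii)$ is misplaced because you are aiming at the wrong contradiction: you try to contradict $z\in X$, whereas the immediate target is $z\in U=B(e_0;\eps)$. Membership $z\in B(e_0;\eps)$ already forces $|z_j|<\eps$ for every $j\ge 1$, so a \emph{single} bad $l$ with $|w_1\cdots w_{ln}|\le M_0$ giving $|z_{ln}|\ge(1-\eps)/M_0$ is already a contradiction once $\eps<1/(1+M_0)$; no iteration over scales or extraction of infinitely many large coordinates is needed. (As a side remark, your closing sentence about Proposition~\ref{top.multrec.butnot.Erec} is off: the paper does not introduce any auxiliary $(\lambda_n)$ there but instead builds a weight $w$ for which $B_w$ satisfies $(ii)$ yet fails to be a syndetic operator, and then uses Remark~\ref{remark.propPBDImpliesSyndetic} and Theorem~\ref{theor.recurrence} with $\lambda_n\equiv 1$.)
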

\begin{proof}
It is easy to see that condition $ii)$ is equivalent to
  \begin{equation}
  \label{condition.top.mult.rec}
 \forall m\in \N, \ \  \sup_{n\in\N}\Big\{\min_{1\leq l \leq m}\{|w_1w_2...w_{ln}|\}\Big\}=\infty.
  \end{equation}
  By Proposition 4.3 \cite{BP} condition (\ref{condition.top.mult.rec}) is equivalent to say $B_w\oplus B_w^2\oplus \cdots\oplus B_w^m$ is hypercyclic on $X^m$, for every $m\in\N$. Finally, Proposition 5.3 \cite{Costakis} is enunciated for bilateral weighted shifts, but obvious modifications asserts that the operator $B_w\oplus B_w^2 \oplus \cdots\oplus B_w^m$ is hypercyclic on $X^m$, for every $m\in \N$ if and only if $B_w$ is topologically multiply recurrent.
\end{proof}
    
  Recall that a set $A\subseteq \N$ is \emph{syndetic} if it has bounded gaps, i.e. there exists $k\in \N$ such that for any $n\in \N, \{n+i: 1\leq i \leq k\}\cap A\neq \emptyset$.  
  
  In order to prove Proposition \ref{top.multrec.butnot.Erec} we will need the following characterization of syndetic unilateral weighted backward shifts \cite{Peris1}.
  \begin{proposition} \cite{Peris1}
   \label{charact.Bw.syndetic}
   
  Let $X=c_0(\Z_+)$ or $l^p(\Z_+)$ and $w=(w_n)_{n\in \Z_+}$ a unilateral bounded weight, then the following are equivalent:
  
  i) $B_w$ is syndetic operator
  
  ii) the set $\{n\in \N:\prod_{i=1}^n\abs{w_i}>M\}$ is syndetic, for any $M>0$.
  \end{proposition}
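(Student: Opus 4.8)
The plan is to prove both implications by testing the sets $N(U,V)$ against an economical family of open sets, and then translating everything into statements about the partial products $P(m):=\prod_{i=1}^{m}\abs{w_i}$ (with $P(0)=1$). Two preliminary reductions will be used throughout. First, recalling that ``$B_w$ is a syndetic operator'' means $N(U,V)=\{n\ge 0:B_w^{\,n}U\cap V\neq\emptyset\}$ is syndetic for every pair of non-empty open sets, and that the family of syndetic sets is closed under supersets while the finitely supported vectors are dense in $X$, it is enough to treat $U=B(u,\eps)$ and $V=B(v,\eps)$ with $u,v$ finitely supported; put $d:=\max\{\deg u,\deg v\}$. Second, since $w$ is bounded we may fix $C:=\max\{1,\sup_i\abs{w_i}\}$, and then $P(m)=P(m')\prod_{i=m'+1}^{m}\abs{w_i}\le C^{\,m-m'}P(m')$, i.e. $P(m')\ge C^{-(m-m')}P(m)$ for all $0\le m'\le m$: the partial products cannot shrink by more than a factor $C$ per step.

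For $(i)\Rightarrow(ii)$ I would fix $M>0$ and test with $U=B(e_0,\tfrac12)$ and $V=B(Me_0,\tfrac12)$. If $n\in N(U,V)$, pick $z\in U$ with $B_w^{\,n}z\in V$; the $e_0$-coordinate of $B_w^{\,n}z$ comes only from the $e_n$-component of $z$ and equals $z_n\prod_{i=1}^{n}w_i$, where $z_n$ is the $n$-th coordinate of $z$. From $z\in U$ we get $\abs{z_n}<\tfrac12$ (it is a coordinate of $z-e_0$, and $\abs{z_n}\le\norm{z-e_0}$ in both $c_0(\Z_+)$ and $l^p(\Z_+)$), and from $B_w^{\,n}z\in V$ we get $\abs{z_n}\,P(n)=\abs{z_n\prod_{i=1}^{n}w_i}>M-\tfrac12$; hence $P(n)>2M-1$. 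Thus the syndetic set $N(U,V)$ is contained in $\{n:P(n)>2M-1\}$, which is therefore syndetic; letting $M\to\infty$ gives $(ii)$.

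For $(ii)\Rightarrow(i)$, with $U,V,d$ as above, for every $n>d$ I would use the ``approximate right inverse'' vector
\[
z:=u+\sum_{j\in\operatorname{supp}v}\frac{v_j}{\prod_{i=j+1}^{j+n}w_i}\,e_{j+n}.
\]
A direct check gives $B_w^{\,n}z=B_w^{\,n}u+v=v$ (the first term vanishes since $n>\deg u$), while, using $\prod_{i=j+1}^{j+n}\abs{w_i}=P(n+j)/P(j)$, one gets $\norm{z-u}\le\kappa(v)/\min_{0\le j\le d}P(n+j)$ for a constant $\kappa(v)$ depending only on $v$ (the estimate has the same shape in $c_0(\Z_+)$ and in $l^p(\Z_+)$). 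Hence $n\in N(U,V)$ as soon as $\min_{0\le j\le d}P(n+j)>M'$, where $M':=\kappa(v)/\eps$. By the step bound from the first paragraph, $P(n+d)>M'C^{\,d}$ already forces $P(n+j)\ge C^{-d}P(n+d)>M'$ for all $0\le j\le d$. Therefore $N(U,V)\supseteq\{\,n>d:P(n+d)>M'C^{\,d}\,\}$, and this last set is the syndetic set $A:=\{\,m:P(m)>M'C^{\,d}\,\}$ translated by $-d$ with finitely many points removed, hence itself syndetic; so $N(U,V)$ is syndetic.

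The main obstacle — essentially the only step that is not routine — is the passage, inside $(ii)\Rightarrow(i)$, from ``$P$ is large on a syndetic set of indices'' to ``$P$ is large at every index of a syndetic set of length-$(d+1)$ blocks of consecutive indices''. A priori the latter set could be much thinner than the former (even empty, were $P$ to oscillate between very large and very small values), and it is precisely the boundedness of the weight $w$ that rules this out, through the factor-$C$-per-step estimate, at the sole cost of inflating the density threshold from $M'$ to $M'C^{\,d}$. Everything else reduces to elementary geometric-series estimates for norms of finitely supported vectors, and the argument is uniform over $X=c_0(\Z_+)$ and $X=l^p(\Z_+)$, $1\le p<\infty$.
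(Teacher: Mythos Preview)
The paper does not supply a proof of this proposition; it is quoted from \cite{Peris1} and used as a black box. Your argument is correct and gives a self-contained proof, so there is nothing to compare against in the present paper.

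A couple of minor comments. In $(i)\Rightarrow(ii)$ your computation tacitly assumes $n\ge 1$ (for $n=0$ the $e_0$-coordinate of $B_w^{0}z$ is $z_0$, which is close to $1$, not to $0$); since deleting one point preserves syndeticity this is harmless, but it is worth a word. In $(ii)\Rightarrow(i)$ you implicitly use that all $w_i\neq 0$ when dividing; if some $w_i=0$ then both (i) and (ii) fail trivially (the set in (ii) is finite, and $B_w$ is not even hypercyclic), so the equivalence holds vacuously.

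It is also worth noting why your boundedness trick is not just a convenience but the heart of the matter. The general filter characterisation (Proposition~\ref{charact.Bw.filter.op}) requires \emph{all} the sets $A_{M;j}=\{n:\prod_{i=j+1}^{j+n}\abs{w_i}>M\}$ to lie in $\F$, whereas the syndetic statement involves only $A_{M;0}$. Since the syndetic family is not a filter, one cannot simply intersect; your step-by-$C$ estimate is exactly what transports syndeticity from $A_{M;0}$ to the finitely many $A_{M;j}$ (with $0\le j\le d$) needed for a given pair $(U,V)$, at the cost of inflating the threshold. This is presumably the same mechanism as in \cite{Peris1}.
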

  \begin{remark}
  \label{remark.propPBDImpliesSyndetic}
  Observe that if $T$ is a hypercyclic operator that satisfies property $\p_{\mathcal{\overline{BD}}}$ then it is a syndetic operator. In fact, Let $U, V$ non-empty open sets in $X$. Pick, $n\in N(U, V)$, then $T^{n}(U)\cap V\neq\emptyset$. Denote the non-empty open set $U_n:=U\cap T^{-n}(V)$ and pick $x\in X$ such that $\overline{Bd}\left(N(x, U_n)\right)>0$.
 
 On the other hand, it is a well known fact that 
 \begin{equation}
 N(x, U_n)-N(x, U_n)+n\subseteq N(U, V).
  \label{ident.grivaux1}
 \end{equation}
  Let $s_1, s_2\in N(x, U_n)$. Then, we have
\[
 T^{s_1-s_2+n}(T^{s_2}x)=T^{n}(T^{s_1}x)\in V.
\]
and identity (\ref{ident.grivaux1}) holds. A theorem of Erd\"{o}s and S\'{a}rk\"{o}zy \cite{ST} asserts that $A-A$ is a syndetic set whenever $A$ has positive upper Banach density. Since $B+n$ is a syndetic set as well as $B$ is syndetic, we conclude that $N(x, U_n)-N(x, U_n)+n$ is a syndetic set, hence $N(U, V)$ is a syndetic set. By definition, $T$ is a syndetic operator.
  \end{remark}

\textbf{Proof of Proposition \ref{top.multrec.butnot.Erec}}
 
  Note that for weighted backward shifts, be hypercyclic is equivalent to be recurrent \cite{Costakis}. Hence by Theorem \ref{theor.recurrence}, it suffices to find $B_w$ non syndetic operator in virtue of Remark \ref{remark.propPBDImpliesSyndetic} and satisfying condition ii) of Proposition \ref{proposition.remark}.
  
   Let us construct $(b_n)_{n\in \N}=B\subseteq \N$ with the property
   \[
   \forall m\in \N, \  \exists n\in \N : ln\in B, \ \ \forall 1\leq l \leq m
   \]
   and define a weight $w=(w_n)_n$ in such a way that $A_1=\{n\in \N: \prod_{i=1}^n|w_i|>1\}$ be non-syndetic and $w$ satisfies condition ii) of Proposition \ref{proposition.remark} on $B$.  Denote $w^n=(w_1, ..., w_n)$. For better understanding we set $w_n^*$ for indicate that $n\in A_1$.
   
   Let $m=1, n=1$ and define $b_1=1\cdot 1=1$. Then, $w^3=(2^*, 1/2^2, 2)$. Let $m=2$, take $n=4$ and define $b_2=1\cdot 4=4, b_3=2\cdot 4=8$. Then, introducing an increasing gap on $A_1$ we set 
   \[
   w^{b_3+3}=(\underbrace{2^*}_{w_{b_1}}, \frac{1}{2^2}, 2, \underbrace{2^*}_{w_{b_2}}, 2^*, 2^*, 2^*, \underbrace{2^*}_{w_{b_3}}, 1/2^7, 2, 2).
   \]
   Now, in order to satisfy condition ii) of Proposition \ref{proposition.remark} we must define $b_4$ at least equal to $b_3+4+(b_3-b_2+1)=17$. Hence, for $m=3$, take $n=17$ and define $b_4=1\cdot 17=17, b_5=2\cdot 17=34, b_6=3\cdot 17=51$, introducing the corresponding increasing gap on $A_1$ we have
   \[
   w^{b_6+4}=(\underbrace{2^*}_{w_{b_1}}, \frac{1}{2^2}, 2, \underbrace{2^*}_{w_{b_2}},..., \underbrace{2^*}_{w_{b_3}}, 1/2^7, 2, 2, 2^*, ..., \underbrace{2^*}_{w_{b_4}}, ..., \underbrace{2^*}_{w_{b_5}},..., \underbrace{2^*}_{w_{b_6}}, 
   \]
   \[1/2^{43}, 2, 2, 2).
   \]
   Again in order to satisfy condition ii) of Proposition \ref{proposition.remark} we must define $b_7$ at least equal to $b_6+5+(b_6-(b_3+4)+1)=97$. Hence for $m=4$, take $n=97$ and define $b_7=1\cdot 97=97, b_8=2\cdot 97, b_9=3\cdot 97, b_{10}=4\cdot 97=388$, introducing the corresponding increasing gap on $A_1$ we have
 \[
 w^{b_{10}+5}=(\underbrace{2^*}_{w_{b_1}}, \frac{1}{2^2}, 2, \underbrace{2^*}_{w_{b_2}},..., \underbrace{2^*}_{w_{b_3}}, 1/2^7, 2, 2, 2^*, ..., \underbrace{2^*}_{w_{b_4}}, ..., \underbrace{2^*}_{w_{b_5}},..., \underbrace{2^*}_{w_{b_6}}, 1/2^{43}, 
 \]
 \[
  2, 2, 2, 2^*, ..., \underbrace{2^*}_{w_{b_7}},..., \underbrace{2^*}_{w_{b_8}}, ... \underbrace{2^*}_{w_{b_9}}, \underbrace{2^*}_{w_{b_{10}}}, 1/2^{337}, 2, 2, 2, 2).
\]
 an so on.
    Clearly $B_w$ satisfies condition ii) of Proposition \ref{proposition.remark} and by Proposition \ref{charact.Bw.syndetic} is not a syndetic operator, and this concludes the proof of Proposition \ref{top.multrec.butnot.Erec}.

\subsubsection{Weighted shifts satisfying property $\p_{\mathcal{\overline{BD}}}$}
In general, an operator satisfying property $\p_{\mathcal{\overline{BD}}}$ is not necessarily hypercyclic, consider for example the identity operator. But in the context of weighted backward shifts on $X=l^p$ or $c_0$, operators satisfying property $\p_{\mathcal{\overline{BD}}}$ are necessarily hypercyclic, even more, we show that any weighted backward shift $B_w$ having property $\p_{\mathcal{\overline{BD}}}$ satisfies a stronger condition, i.e. $B_w\oplus B_w^2\oplus ... \oplus B_w^r$ is $\E^*$-operator on $X^r$, for any $r\in \N$. 
 
 \begin{proposition}
  \label{bilateral.B_w.top.Erecurrent}
Let $w=(w_k)_{k\in \Z}\big(w=(w_n)_{n\in \Z_+}\big)$ be a bounded weight sequence and $B_w$ a bilateral (unilateral) weighted backward shift on $X=l^p(\Z)$ or $c_0(\Z)\big(X=l^p(\Z_+)$ or $c_0(\Z_+)\big)$. If $B_w$ satisfies property $\p_{\mathcal{\overline{BD}}}$  then $B_w\oplus B_w^2\oplus ... \oplus B_w^r$ is $\E^*$-operator on $X^r$, for any $r\in \N$. 
 \end{proposition}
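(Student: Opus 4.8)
The plan is to reduce the claim, by monotonicity of $N(\cdot,\cdot)$ and upward-closedness of $\E^{*}$, to the case of basic open sets $\mathcal U=\prod_{j=1}^{r}B(u_j,\delta)$ and $\mathcal V=\prod_{j=1}^{r}B(v_j,\delta)$ in $X^{r}$ whose centres $u_j,v_j$ are finitely supported, say with supports contained in $[-N,N]$ in the bilateral case (in $[0,N]$ in the unilateral case); here I would use that $\E^{*}$ is an upward-closed filter, being (by its very definition together with Lemma \ref{LemmaBeDo}) an intersection of ultrafilters, exactly as was observed above for $\overline{\mathcal{BD}}^{*}$. Writing $S:=B_w\oplus B_w^{2}\oplus\cdots\oplus B_w^{r}$, so that $S^{k}=B_w^{k}\oplus B_w^{2k}\oplus\cdots\oplus B_w^{rk}$, and recalling the identity $(B_w^{n}z)_m=z_{m+n}\prod_{i=m+1}^{m+n}w_i$ (valid in both the unilateral and bilateral settings), the task becomes: exhibit a set $G\in\E^{*}$ such that, for every $k\in G$ and every $1\le j\le r$, the weight products $\pi_m^{(j,k)}:=\prod_{i=m+1}^{m+jk}w_i$ satisfy $|\pi_m^{(j,k)}|>M$ for all $|m|\le N$ and $|\pi_m^{(j,k)}|<1/M$ for all $m\in[-N-jk,N-jk]$, where $M$ is an arbitrarily prescribed constant.

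To produce such a $G$ I would feed the ball $U:=B(y,\eps)$ with $y:=C\sum_{|m|\le N}e_m$ (for a large constant $C$ and any fixed $\eps>0$) into the machinery from the proof of Theorem \ref{theor.recurrence}. Since $B_w$ has property $\p_{\mathcal{\overline{BD}}}$, that argument yields $x\in X$ with $F:=\{n:B_w^{n}x\in B(y,\eps/2)\}$ of positive upper Banach density and, by equation (\ref{equat4}) (the Bergelson--McCutcheon polynomial Szemer\'edi theorem for $D$-sets), a set $W:=\{k:\overline{Bd}(F\cap(F-k)\cap\cdots\cap(F-rk))>0\}\in\E^{*}$. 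For each $k\in W$ pick $a$ in the nonempty set $F\cap(F-k)\cap\cdots\cap(F-rk)$ and set $u:=B_w^{a}x$, so that $u,B_w^{k}u,\dots,B_w^{rk}u$ all lie in $B(y,\eps/2)$. Comparing coordinates of $u$ and of $B_w^{jk}u$ with those of $y$ via the displayed identity, and using that $m+jk$ (resp. $m-jk$) falls outside $[-N,N]$ once $k>2N$, one reads off $|\pi_m^{(j,k)}|>M:=2C/\eps-1$ for $|m|\le N$ and $|\pi_m^{(j,k)}|<1/M$ for $m\in[-N-jk,N-jk]$, for all $1\le j\le r$. I would then take $G:=W\cap\{k>2N\}$, which lies in $\E^{*}$ since the second set is cofinite and every essential idempotent contains all cofinite sets.

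Finally, for $k\in G$ I would set $x_j:=u_j+\sum_{m}v_{j,m}\,(\pi_m^{(j,k)})^{-1}e_{m+jk}$, the sum running over the finite support of $v_j$, the denominators being nonzero by the lower bound just obtained; then $B_w^{jk}x_j=B_w^{jk}u_j+v_j$. The lower bound gives $\norm{x_j-u_j}\le\norm{v_j}/M$, and the upper bound gives $\norm{B_w^{jk}u_j}\le\norm{u_j}/M$ (in the unilateral case $B_w^{jk}u_j=0$ outright once $jk>N$), hence $\norm{B_w^{jk}x_j-v_j}\le\norm{u_j}/M$. Choosing $C$ large enough that $M>\max_{1\le j\le r}\max(\norm{u_j},\norm{v_j})/\delta$, we get $x_j\in B(u_j,\delta)$ and $B_w^{jk}x_j\in B(v_j,\delta)$ simultaneously for all $j$, i.e. $(x_1,\dots,x_r)\in\mathcal U$ and $S^{k}(x_1,\dots,x_r)\in\mathcal V$. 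Thus $k\in N(\mathcal U,\mathcal V)$, so $G\subseteq N(\mathcal U,\mathcal V)$ and $N(\mathcal U,\mathcal V)\in\E^{*}$, which shows $S$ is an $\E^{*}$-operator. The unilateral case runs verbatim with $[0,N]$ and $k>N$ in place of $[-N,N]$ and $k>2N$.

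The step I expect to be the crux — and the reason the hypothesis has to be $\p_{\mathcal{\overline{BD}}}$ rather than mere topological multiple recurrence — is precisely the passage from ``many admissible $k$'s'' to ``a set of admissible $k$'s lying in $\E^{*}$'', which is supplied by equation (\ref{equat4}), the very Bergelson--McCutcheon input that drives Theorem \ref{theor.recurrence}; everything else is the standard bookkeeping attached to weighted backward shifts, together with the elementary remark that auxiliary conditions such as $k>2N$ cost nothing because essential idempotents contain all cofinite sets.
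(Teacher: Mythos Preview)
Your argument is correct and rests on the same engine as the paper's proof: feed a carefully chosen ball into $\p_{\overline{\mathcal{BD}}}$, apply the Bergelson--McCutcheon input (equation~\eqref{equat4}) to obtain a set $W\in\E^{*}$ of good step sizes, and read off two-sided weight-product inequalities from the near-recurrence of some $u=B_w^{a}x$ and its iterates $B_w^{jk}u$.

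The packaging, however, differs from the paper in a way worth noting. The paper routes the argument through two ready-made results: first Theorem~\ref{theor.recurrence} (property $\p_{\overline{\mathcal{BD}}}$ $\Rightarrow$ topological $\E$-recurrence), then Proposition~\ref{charact.Bw.filter.op} and Lemma~\ref{corol.weighted.shifts.F.oper} from \cite{Peris1} (the filter characterization $A_{M;j},\bar A_{M;j}\in\F\Leftrightarrow B_w$ is an $\F$-operator). In particular the paper tests against balls $B(e_j,\delta)$ around single basis vectors, obtaining $A_{M;j}\in l\E^{*}$ and $\bar A_{M;j}\in l\E^{*}$ one index at a time, and then lets Lemma~\ref{corol.weighted.shifts.F.oper} assemble these into the conclusion via the filter property of $\E^{*}$. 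You instead bypass both black boxes: a single ``fat'' center $y=C\sum_{|m|\le N}e_m$ yields all the required bounds $|\pi_m^{(j,k)}|>M$ and $|\pi_m^{(j,k)}|<1/M$ simultaneously, and you then build the witness $(x_1,\dots,x_r)\in\mathcal U$ with $S^{k}(x_1,\dots,x_r)\in\mathcal V$ by hand. This buys self-containment (no appeal to the external characterization in~\cite{Peris1}) at the price of redoing the standard weighted-shift bookkeeping that Proposition~\ref{charact.Bw.filter.op} and Lemma~\ref{corol.weighted.shifts.F.oper} encapsulate; the paper's route buys modularity and brevity by quoting those results.
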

  
  Notice that by Lemma \ref{LemmaBeDo}, $\E^*$ is a filter, since it can be written as an intersection of ultrafilters, indeed $\E^*=\cap_{p\in \E} p$.

Let $M>0, j\in \Z$ and $w=(w_i)_i$ a bounded weight. Let us denote
    \[
    A_{M; j}=\Big\{n\in \N: \prod_{i=j+1}^ {j+n} \abs{w_i}>M\Big\}, \quad   \bar{A}_{M; j}=\Big\{n\in \N: \frac{1}{\prod_{i=j-n+1}^j \abs{w_i}}>M\Big\}.
    \]

    In order to prove Proposition \ref{bilateral.B_w.top.Erecurrent}, we will need the following characterization of $\F$-operators for weighted shifts and $\F$ a filter, see \cite{Peris1}.
    \begin{proposition} \cite{Peris1}
    \label{charact.Bw.filter.op}
    
    Let $\F$ a filter on $\N$, then
    
    i) the bilateral weighted backward shift $B_w$ is $\F$-operator on $X=c_0(\Z)$ or $l^p(\Z)$ if and only if $A_{M;j}\in \F$ and $\bar{A}_{M; j}\in \F$, for any $M>0, j\in \Z$
    
    ii) the unilateral weighted backward shift $B_w$ is $\F$-operator on $X=c_0(\Z_+)$ or $l^p(\Z_+)$ if and only if $A_{M;j}\in \F$, for any $M>0, j\in \Z_+$.
    \end{proposition}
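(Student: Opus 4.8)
The plan is to prove both equivalences at once, using a single coordinate of the iterates as the bridge between transitivity and the product conditions. First I would reduce to basic open sets: since finitely supported vectors are dense in $X$ and $\F$ is upward closed, it suffices to check $N(U,V)\in\F$ for $U=B(x,\epsilon)$ and $V=B(y,\epsilon)$ with $x,y$ finitely supported. Indeed, for arbitrary nonempty open $U'\supseteq B(x,\epsilon)$ and $V'\supseteq B(y,\epsilon)$ one has $N(B(x,\epsilon),B(y,\epsilon))\subseteq N(U',V')$, and upward closure transfers membership. The whole argument then rests on the identity $B_w^n e_m=\big(\prod_{i=m-n+1}^{m}w_i\big)e_{m-n}$, equivalently, on the coordinate formula $(B_w^n z)_b=z_{b+n}\prod_{i=b+1}^{b+n}w_i$, valid for every $b$ and every $n\geq 1$, together with the elementary fact that a single coordinate is dominated by the norm in both $c_0$ and $l^p$.

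For \emph{sufficiency}, assume the sets $A_{M;j}$ (and $\bar A_{M;j}$ in the bilateral case) all lie in $\F$. Fix $x,y$ supported in $[-s,s]$ (in $[0,s]$ in the unilateral case) and $\epsilon>0$, and for a given $n$ put $u:=\sum_{|k|\leq s}\frac{y_k}{\prod_{i=k+1}^{k+n}w_i}\,e_{k+n}$, so that $B_w^n u=y$ exactly while $\norm{u}$ is controlled by $\max_{|k|\le s}\abs{y_k}/\prod_{i=k+1}^{k+n}\abs{w_i}$ (with the obvious $l^p$ variant). Hence $\norm{u}<\epsilon$, i.e.\ $z:=x+u\in B(x,\epsilon)$, whenever $n\in\bigcap_{|k|\leq s}A_{M;k}$ for $M$ large, and $B_w^n z=y+B_w^n x$. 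In the unilateral case $B_w^n x=0$ once $n>s$, so $B_w^n z=y\in V$ directly; in the bilateral case $\norm{B_w^n x}$ is controlled by $\max_{|k|\le s}\abs{x_k}\prod_{i=k-n+1}^{k}\abs{w_i}$, which is $<\epsilon$ whenever $n\in\bigcap_{|k|\leq s}\bar A_{M;k}$ for $M$ large (here one uses $w_i\neq 0$, as is standard for bilateral shifts). Each of the finitely many sets involved lies in $\F$, so their intersection does too because $\F$ is a filter; enlarging $M$ beyond the finitely many values of the relevant products for $n\leq 2s$ pushes this intersection into $\{n>2s\}$, where all of the above holds. That intersection is contained in $N(U,V)$, so $N(U,V)\in\F$.

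For \emph{necessity}, assume $B_w$ is an $\F$-operator, fix $M>0$ and an index $j$, choose $\epsilon<1/(M+1)$, and set $U=B(e_j,\epsilon)$, $V=B(2e_j,\epsilon)$; these are disjoint, so $0\notin N(U,V)$. Let $n\in N(U,V)$ with $n\geq 1$, witnessed by $z$. Reading off the $j$-th coordinate of $B_w^n z$ gives $\abs{(B_w^n z)_j-2}<\epsilon$, hence $\abs{z_{j+n}}\prod_{i=j+1}^{j+n}\abs{w_i}>2-\epsilon$, while $\abs{z_{j+n}}\leq\norm{z-e_j}<\epsilon$; therefore $\prod_{i=j+1}^{j+n}\abs{w_i}>(2-\epsilon)/\epsilon>M$, i.e.\ $n\in A_{M;j}$. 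In the bilateral case, reading off the $(j-n)$-th coordinate (where $2e_j$ vanishes) gives $\abs{(B_w^n z)_{j-n}}<\epsilon$, and since $(B_w^n z)_{j-n}=z_j\prod_{i=j-n+1}^{j}w_i$ with $\abs{z_j}>1-\epsilon$, we get $\prod_{i=j-n+1}^{j}\abs{w_i}<\epsilon/(1-\epsilon)<1/M$, i.e.\ $n\in\bar A_{M;j}$. Thus $N(U,V)\subseteq A_{M;j}$ (and $\subseteq\bar A_{M;j}$ in the bilateral case), and upward closure yields $A_{M;j}\in\F$ (resp.\ $\bar A_{M;j}\in\F$); as $M$ and $j$ were arbitrary, the conditions hold for all $M>0$ and all $j$.

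The \textbf{main obstacle} is not the estimates, which are routine and identical across $c_0$ and $l^p$ up to replacing a maximum by an $l^p$ sum, but landing the witnessing set \emph{exactly} inside $\F$ rather than inside a superset that differs from an $\F$-set by finitely many indices. This forces two precautions: in necessity the two balls must be taken disjoint so as to kill $n=0$, and in sufficiency $M$ must be enlarged to expel all small $n$ from the relevant products. The other genuinely structural point is the appearance of $\bar A_{M;j}$ only in the bilateral case: there $B_w^n x$ no longer vanishes but merely drifts to low indices, so making $B_w^n z$ truly close to $y$ requires $\norm{B_w^n x}$ itself to be small, which is exactly what the $\bar A$ conditions encode.
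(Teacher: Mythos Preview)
The paper does not prove this proposition: it is quoted from \cite{Peris1} and no argument is given in the present paper. So there is no ``paper's own proof'' to compare against; your proposal is a self-contained proof that the paper simply outsources.

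Your argument is correct and is the standard one. The reduction to balls around finitely supported vectors uses only upward closure; the sufficiency direction builds an explicit approximate preimage $z=x+u$ and uses the filter property to intersect the finitely many $A_{M;k}$ (and $\bar A_{M;k}$); the necessity direction reads off a single coordinate of $B_w^n z$ against the basis-vector balls $B(e_j,\epsilon)$, $B(2e_j,\epsilon)$ and extracts the product estimate. Two small points worth keeping explicit in a final write-up: in the $l^p$ case the norm of $u$ is bounded by $(2s{+}1)^{1/p}$ times the maximum you wrote, so the threshold for $M$ must absorb that constant; and in the bilateral sufficiency step you should record that the weights are assumed nonzero (otherwise $u$ is undefined). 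Your handling of small $n$---disjoint balls to kill $n=0$ in necessity, and enlarging $M$ to expel $n\le 2s$ in sufficiency---is exactly what is needed to land in $\F$ rather than in a cofinite modification of an $\F$-set.
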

    The following Lemma is an easy consequence of Proposition \ref{charact.Bw.filter.op}.
 \begin{lemma} \cite{Peris1}
 \label{corol.weighted.shifts.F.oper}
 
 Let $\mathscr{F}$ a filter, $r\in \N$ and $B_w$ a bilateral weighted backward shift on $X=l^p(\Z)$ or $c_0(\Z)$, then the following are equivalent:
 
  i) $A_{M, j}\in l\F$ and $\bar{A}_{M, j}\in l\F$, for any $1\leq l\leq r, M>0, j\in \Z$
 
 ii) $B_w\oplus B_w^2\oplus ... \oplus B_w^r$ is $\mathscr{F}$-operator on $X^r$.
 \end{lemma}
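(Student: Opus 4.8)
The plan is to read off the equivalence from Proposition~\ref{charact.Bw.filter.op} after recording two elementary facts: one about dilations of filters, and one about direct sums of $\mathscr{F}$-operators. Recall that for a family $\mathscr{F}$ and $l\in\N$ the dilation is $l\mathscr{F}=\{A\subseteq\N:\{n\in\N:ln\in A\}\in\mathscr{F}\}$. The first step is the observation that if $\mathscr{F}$ is a filter, then so is $l\mathscr{F}$: the assignment $A\mapsto\{n:ln\in A\}$ is monotone and commutes with finite intersections, and if $\{n:ln\in A\}$ is infinite then $A$ is infinite; hence conditions (I), (II) and stability under finite intersections all transfer from $\mathscr{F}$ to $l\mathscr{F}$. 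Consequently Proposition~\ref{charact.Bw.filter.op} may be applied with $l\mathscr{F}$ in place of $\mathscr{F}$.

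Next I would establish the key translation: for every $l\ge 1$, the power $B_w^{\,l}$ is an $\mathscr{F}$-operator on $X$ if and only if $B_w$ is an $l\mathscr{F}$-operator on $X$. This is immediate from the identity $N_{B_w^{\,l}}(U,V)=\{n\ge 0:ln\in N_{B_w}(U,V)\}$, which holds for all open $U,V$ because $(B_w^{\,l})^{n}=B_w^{\,ln}$: by the definition of $l\mathscr{F}$, the left-hand set lies in $\mathscr{F}$ exactly when $N_{B_w}(U,V)\in l\mathscr{F}$. Feeding this into Proposition~\ref{charact.Bw.filter.op}(i), applied to the filter $l\mathscr{F}$, gives that $B_w^{\,l}$ is an $\mathscr{F}$-operator on $X$ if and only if $A_{M;j}\in l\mathscr{F}$ and $\bar{A}_{M;j}\in l\mathscr{F}$ for all $M>0$ and $j\in\Z$.

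To finish, I would invoke the standard fact that, since $\mathscr{F}$ is a filter, a finite direct sum $S:=S_1\oplus\cdots\oplus S_r$ on $X^r$ is an $\mathscr{F}$-operator if and only if each $S_i$ is an $\mathscr{F}$-operator on $X$. Indeed, open sets of product form $U_1\times\cdots\times U_r$ constitute a basis of $X^r$ and $N_S(U_1\times\cdots\times U_r,\,V_1\times\cdots\times V_r)=\bigcap_{i=1}^{r}N_{S_i}(U_i,V_i)$; using that $N_S(\cdot,\cdot)$ is monotone in each argument it suffices to test the $\mathscr{F}$-condition on this basis, and since $\mathscr{F}$ is closed under finite intersections and under supersets, the displayed intersection lies in $\mathscr{F}$ for every choice of the $U_i,V_i$ exactly when each $N_{S_i}(U_i,V_i)$ does. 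Applying this with $S_l=B_w^{\,l}$ for $1\le l\le r$, and then the previous paragraph (noting $1\mathscr{F}=\mathscr{F}$), yields precisely the equivalence of (i) and (ii). The only points needing a little care are the reduction to product-form open sets in the direct-sum step and the verification that $l\mathscr{F}$ is again a filter so that Proposition~\ref{charact.Bw.filter.op} applies; neither is a real obstacle, so I expect the argument to be short.
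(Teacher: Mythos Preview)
Your proposal is correct and follows essentially the same route as the paper: reduce via Proposition~\ref{charact.Bw.filter.op} to the statement that each $B_w^{\,l}$ is an $\mathscr{F}$-operator, and then use that for a filter the direct sum $B_w\oplus\cdots\oplus B_w^{\,r}$ is an $\mathscr{F}$-operator iff each factor is, through the identity $N_{S_1\oplus\cdots\oplus S_r}(\prod U_i,\prod V_i)=\bigcap_i N_{S_i}(U_i,V_i)$. The only difference is cosmetic: the paper compresses your first two paragraphs into a single clause, while you make explicit that $l\mathscr{F}$ is again a filter and that $N_{B_w^{\,l}}(U,V)=\{n:ln\in N_{B_w}(U,V)\}$, which is exactly the translation needed to invoke Proposition~\ref{charact.Bw.filter.op} with $l\mathscr{F}$ in place of $\mathscr{F}$.
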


 \begin{proof}
 By Proposition \ref{charact.Bw.filter.op}, we have that $A_{M, j}\in l\F$ and $\bar{A}_{M, j}\in l\F$, for any $M>0, j\in \Z$ and any $1\leq l \leq r$ is equivalent to $N_{B_w^l}(U, V)\in\mathscr{F}$ for any $U, V$ non-empty open sets in $X$ and $1\leq l \leq r$. On the other hand, condition $ii)$ is equivalent to $\cap_{l=1}^r N_{B^l_w}(U_l, V_l)\in \F$, for any pair of finite collection of non-empty open sets $(U_l, V_l)_{l=1}^r$. The conclusion follows since $\F$ is a filter.
 \end{proof}

\textbf{Proof of Proposition \ref{bilateral.B_w.top.Erecurrent}.}
 
    \textbf{Fact 3:} If $B_w$ is topologically $\E$-recurrent then, for any $r\in \N$ we have that $A_{M; j}\in l\E^*$ and $\bar{A}_{M; j}\in l\E^*$, for any $j\in \Z, M>0$ and $1\leq l\leq r$.
    
    Let $M>0, j\in \Z$. We must show 
   \[
   \forall r\in\N, \ \ \exists W\in\E^*: lk\in A_{M, j}, lk\in \bar{A}_{M, j}\ \ \forall k\in W, \ \ 1\leq l\leq r.
   \]
   
   Let $r\in \N$. Pick $\delta>0$ such that $(1-\delta)/\delta >M$. Consider the open ball $B(e_j, \delta)=\{x\in X: \norm{x-e_j}<\delta\}$. $B_w$ topologically $\E$-recurrent implies there exists $W\in \E^*$ such that for each $k\in W$ there exists
   \begin{equation}
   y\in B(e_j, \delta)
   \label{equation.bol1}
   \end{equation}
   such that 
   \begin{equation}
   T^{lk}y\in B(e_j, \delta)
   \label{equation.bol2}
   \end{equation}

  for any $1\leq l \leq r$. The existence of $W\in \E^*$ is due to the fact that we are considering $(\lambda_n)_n=1$ and hence in (\ref{equat5}), $A$ can be taken as $\N$.
  
  By (\ref{equation.bol1}),
  \begin{equation}
  \abs{y_j-1}<\delta , \qquad \abs{y_t}<\delta \ \mbox{for} \ t\neq j.
  \label{equation.bol3}
  \end{equation}
  By (\ref{equation.bol2}),
  \begin{equation}
  \prod_{i=1}^{lk}\abs{w_{i+j}y_{j+lk}-1}<\delta, \qquad \prod_{i=1}^{lk}\abs{w_{i+t}y_{t+lk}}<\delta \ \mbox{for} \ t\neq j.
  \label{equation.bol4}
  \end{equation}
for any $1\leq l\leq r$.

 Now by (\ref{equation.bol4}), we have 
  \begin{equation}
  \Big|\prod_{i=1}^{lk}w_{i+j}y_{j+lk}\Big|>1-\delta.
  \label{equation.bol5}
  \end{equation}
Thus by (\ref{equation.bol3}) and (\ref{equation.bol5}),
 \[
  \prod_{i=1}^{lk}\abs{w_{i+j}}>\frac{1-\delta}{\delta} >M,
 \]
and  $lk\in A_{M, j}$, for any $1\leq l\leq r$.
 
 On the other hand, by \eqref{equation.bol4},
 \[
 \prod_{i=j-lk+1}^j \abs{w_i y_j}<\delta<\frac{1-\delta}{M}. 
 \]
 Furthermore, by \eqref{equation.bol3} we get
 \[
 \prod_{i=j-lk+1}^j \abs{w_i}(1-\delta)< \prod_{i=j-lk+1}^j \abs{w_i y_j}<\frac{1-\delta}{M}.
  \]
  Hence, $\prod_{i=j-lk+1}^j\abs{w_i}<1/M$ and $lk\in \bar{A}_{M, j}$, for any $1\leq l\leq r$. This concludes the proof of Fact 3.
 
   By Fact 3 and Lemma \ref{corol.weighted.shifts.F.oper} we have that, if $B_w$ is topologically $\E$-recurrent then $B_w\oplus B_w^2\oplus ... \oplus B_w^r$ is $\E^*$-operator, for any $r\in \N$. We conclude the proof by Theorem \ref{theor.recurrence}. The proof of the unilateral version follows the same sketch of the bilateral one.

   \begin{remark}
 The converse of the unilateral version of Proposition \ref{bilateral.B_w.top.Erecurrent} does not hold. Denote by $B(x; r)$ the open ball centered at $x$ with radius $r$. Recall that an operator $T$ is called \emph{mixing} if the set $N(U, V)=\{n\in\N: T^n(U)\cap V\neq \emptyset\}$ is a cofinite set, for any non-empty open sets $U, V$ in $X$. It was pointed out to me (personal communication) by Quentin Menet the following:
 \begin{proposition}
 \label{result.Menet}
There exists a mixing weighted backward shift $B_w$ on $l^p(\Z_+)$ such that $N(x, B(e_0; 1/2))$ has upper Banach density equals to zero for any $x\in l^p(\Z_+)$.
 \end{proposition}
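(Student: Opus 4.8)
The plan is to write down an explicit bounded weight and show that the open ball $B(e_0;1/2)$ witnesses the statement. Take $w=(w_n)_{n\in\Z_+}$ with $w_n=2$ when $n\in\{2^{2^j}:j\ge 0\}$ and $w_n=1$ otherwise, and put $P_n:=\prod_{i=1}^{n}w_i$ (so $P_0=1$ and $(P_n)_n$ is nondecreasing). Since there are infinitely many $n$ with $w_n=2$, we have $P_n\to\infty$, which is exactly the classical condition for $B_w$ to be mixing on $X=l^p(\Z_+)$ (see \cite{Grosse1}; in any case it is immediate by approximating finitely supported vectors). Because the indices where $w_n=2$ are extremely sparse, $P_n\le 2\log_2 n$ for $n\ge 2$, so $P_n^{p}=o(n)$; this decay, together with monotonicity of $(P_n)$, is all that will be used below.

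Fix $x\in X$ and set $N:=N(x,B(e_0;1/2))=\{n\ge 0:\norm{B_w^nx-e_0}<1/2\}$; we may assume $N$ is infinite, as otherwise $\overline{Bd}(N)=0$ trivially. The crux is the estimate
\[
\sum_{n'\in N,\ n'>n}\frac{1}{P_{n'-n}^{p}}\;<\;1\qquad\text{for every }n\in N.
\]
To prove it, note $(B_w^nx)_k=\frac{P_{n+k}}{P_k}x_{n+k}$ for $k\ge 0$, so $\norm{B_w^nx-e_0}^p<(1/2)^p$ yields in particular $\sum_{k\ge 1}\big(\frac{P_{n+k}}{P_k}\big)^p\abs{x_{n+k}}^p<(1/2)^p$. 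On the other hand, if $n'=n+k\in N$, then the $0$-th coordinate of $B_w^{n'}x$ satisfies $\abs{P_{n'}x_{n'}-1}<1/2$, hence $\abs{x_{n'}}>\frac{1}{2P_{n'}}$; plugging this in, the $k$-th summand above is $>\frac{1}{2^p P_{n'-n}^{p}}$. Summing over the distinct values $k=n'-n$ coming from $n'\in N$, $n'>n$, and multiplying by $2^p$, gives the displayed inequality.

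From this the conclusion follows quickly. Fix $s\in\N$ and a window $W=[k+1,k+s]$; if $N\cap W\ne\emptyset$, let $n_1=\min(N\cap W)$. Every other element $n'$ of $N\cap W$ has $1\le n'-n_1\le s-1$, so $P_{n'-n_1}\le P_s$ by monotonicity, and the estimate applied at $n_1$ forces $(\abs{N\cap W}-1)\,P_s^{-p}<1$, i.e.\ $\abs{N\cap W}<P_s^{p}+1$. As this holds for every window of length $s$, $\alpha^s=\limsup_{k\to\infty}\abs{N\cap[k+1,k+s]}\le P_s^{p}+1$, whence $\overline{Bd}(N)=\lim_{s\to\infty}\alpha^s/s\le\lim_{s\to\infty}(P_s^{p}+1)/s=0$ since $P_s^{p}=o(s)$. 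Since $x$ was arbitrary, this proves the proposition.

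The delicate point is the displayed summable estimate — and, before that, recognising that this is the right target: arguing only with individual coordinates of $B_w^nx$, or with the operator norms $\norm{B_w^d}=2^d$, merely forces the differences $n'-n$ to miss a finite set and says nothing about Banach density. The idea is to exploit the full $l^p$-tail of $B_w^nx$ together with the lower bound $\abs{x_{n'}}>\frac{1}{2P_{n'}}$ imposed at each return time $n'$; this turns the open-ball condition into a constraint of $\sum P^{-p}<1$ type, which in turn caps the number of return times in any window of length $s$ at about $P_s^{p}$ — and the weight is tailored precisely so that $P_s^{p}=o(s)$ while still $P_s\to\infty$.
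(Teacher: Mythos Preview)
Your proof is correct and follows essentially the same approach as the paper's: both combine the $l^p$-tail estimate of $B_w^n x$ at the smallest return time in a window with the lower bound $|x_{n'}|>\tfrac{1}{2P_{n'}}$ coming from the $0$th coordinate at each other return time $n'$, yielding a constraint of the form $\sum_j P_{n_j-n_0}^{-p}<1$, and then choose a weight with $P_n\to\infty$ but $P_n^{p}=o(n)$. The only cosmetic differences are the choice of weight (the paper takes $w_\nu=((\nu+1)/\nu)^{1/(2p)}$ so that $P_n=(n+1)^{1/(2p)}$) and the logical packaging (the paper argues by contraposition, deriving $\limsup_l l/P_{lm}^{p}<\infty$ from $\overline{Bd}(N)>1/m$, whereas you derive the uniform window bound $|N\cap W|<P_s^{p}+1$ directly).
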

  Now, let $B_w$ given by Proposition \ref{result.Menet}, then $B_w$ is mixing on $l^p(\Z_+)$ and does not satisfy property $\p_{\mathcal{\overline{BD}}}$. Finally take into account the following fact proved in \cite{BMPS}: $B_w$ is mixing if and only if $B_w\oplus B_w^2\oplus ... \oplus B_w^r$ is mixing, for any $r\in \N$, thus the converse of the unilateral version of Proposition \ref{bilateral.B_w.top.Erecurrent} does not hold  \end{remark}
   The proof of Proposition \ref{result.Menet} is due to Quentin Menet and we include it here for the sake of completeness.
   \begin{proof} 
  Let $B_w$ a weighted backward shift such that $|w_{n}|\ge 1$ for any $n\ge 1$ and suppose there exists $x\in l^p(\N)$ and $m\ge 1$ such that 
\[\overline{Bd}\big(N\big(x,B\big(e_0; \frac{1}{2}\big)\big)\big)>\frac{1}{m}.\]
We denote by $A$ the set $B(e_0;1/2)$. We have thus
\[\lim_{s\to \infty}\limsup_{k\to \infty}\frac{|A\cap[k+1,k+s]|}{s}>\frac{1}{m}.\]
In other words, there exists $s_0\ge 1$ such that for any $s\ge s_0$, any $k_0\ge 1$, there exists $k\ge k_0$
such that
\[|A\cap[k+1,k+s]|>\frac{s}{m}.\]
In particular, we obtain the existence of an integer $l_0\ge 1$ such that for any $l\ge l_0$, we can find an integer $k\ge 1$ satisfying
\[|A\cap[k+1,k+lm]|>l.\]
This means that for any $l\ge l_0$, there exist $n_0,\cdots,n_l\in [1,lm]$ such that for any $0\le j\le l$,
\[\|B^{k+n_j}_wx-e_0\|^p_p<\frac{1}{2^p}.\]
We deduce that
\begin{equation}
\sum_{j=1}^l\prod_{\nu=1}^{k+n_0}|w_{n_j-n_0+\nu}x_{k+n_j}|^p<\frac{1}{2^p}
\label{eq:ceb1}
\end{equation}
and that for any $0\le j\le l$
\begin{equation}
\prod_{\nu=1}^{k+n_j}|w_{\nu} x_{k+n_j}|> \frac{1}{2}.
\label{eq:ceb2}
\end{equation}
We get by \eqref{eq:ceb2}
\[\sum_{j=1}^{l}\prod_{\nu=1}^{k+n_0}|w_{n_j-n_0+\nu} x_{k+n_j}|^p=\sum_{j=1}^{l}\frac{\prod_{\nu=1}^{k+n_j}|w_{\nu}|^p}{\prod_{\nu=1}^{n_j-n_0}|w_{\nu}|^p}|x_{k+n_j}|^p> \sum_{j=1}^{l}\frac{1}{2^p\prod_{\nu=1}^{n_j-n_0}|w_{\nu}|^p}\]
and thus by \eqref{eq:ceb1}
\[\inf_{1\le j\le lm} \frac{l}{\prod_{\nu=1}^{j}|w_{\nu}|^p}\le \inf_{1\le j \le l} \frac{l}{\prod_{\nu=1}^{n_j-n_0}|w_{\nu}|^p}\le 2^p\sum_{j=1}^{l}\frac{1}{2^p\prod_{\nu=1}^{n_j-n_0}|w_{\nu}|^p}<1.\]
Hence, we conclude that there exists $m\ge 1$ such that
\[\limsup_{l\to \infty}\frac{l}{\prod_{\nu=1}^{lm}|w_{\nu}|^p}< \infty\]
 because $|w_{n}|\ge 1$ for any $n\ge 1$.

Now, consider the weighted shift $B_w$ where $w_{\nu}=\big((\nu+1)/\nu\big)^{\frac{1}{2p}}$. Since
\[\prod_{\nu=1}^{n}|w_{\nu}|=(n+1)^{\frac{1}{2p}}\to \infty,\]
the weighted shift $B_w$ is mixing on $l^p(\Z_+)$, see Chapter 4 \cite{Grosse1}. On the other hand,  $N(x, B(e_0; 1/2))$ has upper Banach density equals to zero for any $x\in l_p(\Z_+)$ since for any $n\ge 1$, $|w_{n}|\ge 1$ and for any $m\ge 1$,
\[\frac{l}{\prod_{\nu=1}^{lm}|w_{\nu}|^p}=\frac{l}{\sqrt{lm+1}}\to \infty.\]
\end{proof}
 
\section{A characterization of reiteratively hypercyclic operators}
     Using the same ideas of the proof of Theorem \ref{theor.recurrence} we can obtain automatically more information about the return time set of a reiteratively hypercyclic operator respect to a reiteratively hypercyclic vector.
  \begin{definition}
   We will say that $T\in\Lin(X)$ is  \emph{$\E$-reiteratively hypercyclic with respect to $\lambda=(\lambda_n)_n$}  if there exists some $p\in\E$ and $x\in X$ such that for any non-empty open set $U$ in $X$ and $r\in\N$, it holds
 \[
\Big\{k\in\N: \overline{Bd}\Big(a\in\N: \lambda_aT^ax\in\cap_{j=0}^{r} T^{-jk}(U)\Big)>0 \Big\}\in p.
 \]
 \end{definition}
 In the case $(\lambda_n)_n=1$, we simply say that $T$ is $\E$-reiteratively hypercyclic.
 
 Following the same sketch of proof of Theorem \ref{theor.recurrence} we have the following:
  \begin{theorem}
   Let $(\lambda_n)_n$ a sequence of non-zero complex numbers and some $p\in\E$ such that there exists $A\in p$ for which
     \begin{equation}
    \mathcal{\underline{BD}}_1-\lim_n\Big|\frac{\lambda_n}{\lambda_{n+k}}\Big|=1, ~ \forall k\in A
     \label{cond.theor.equiv.rhc.op}
     \end{equation}
   then the family $(\lambda_nT^n)_n$ is reiteratively hypercyclic if and only if  $T$ is $\E$-reiteratively hypercyclic with respect to $\lambda=(\lambda_n)_n$.
    \end{theorem}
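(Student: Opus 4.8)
The plan is to mimic, almost verbatim, the proof of Theorem \ref{theor.recurrence}, with "property $\p_{\mathcal{\overline{BD}}}$" replaced by "reiterative hypercyclicity" throughout. The key structural difference is that in the definition of property $\p_{\mathcal{\overline{BD}}}$ the vector $x$ may depend on the open set $U$, whereas for reiterative hypercyclicity a \emph{single} vector $x$ must work for all $U$ simultaneously; likewise in the conclusion of $\E$-reiterative hypercyclicity the vector $x$ and the idempotent $p$ are quantified once, before $U$ and $r$. So the task is to track the quantifier order carefully and check that the argument of Theorem \ref{theor.recurrence} delivers the stronger "$\exists x\,\forall U$" form when fed the stronger hypothesis.

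For the easy direction ($\E$-reiteratively hypercyclic $\Rightarrow$ $(\lambda_nT^n)_n$ reiteratively hypercyclic), I would argue as in the first paragraph of the proof of Theorem \ref{theor.recurrence}: fix the witnessing $x\in X$ and $p\in\E$; for any non-empty open $U$, applying the hypothesis with $r=1$ gives $\{k\in\N:\overline{Bd}(a:\lambda_aT^ax\in U\cap T^{-k}U)>0\}\in p$, so this set is non-empty, and picking any such $k$ yields $\overline{Bd}(a:\lambda_aT^ax\in U)>0$; since the same $x$ works for every $U$, the sequence $(\lambda_nT^n)_n$ is reiteratively hypercyclic (again using the standard reduction from $(\lambda_n)_n$ to $(|\lambda_n|)_n$ along the lines of Lemma 3.7 of \cite{Costakis}). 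For the converse, fix a reiteratively hypercyclic vector $x$ for $(\lambda_nT^n)_n$. The point is that for every non-empty open $U$ the return set $F=\{n:\lambda_nT^nx\in U'\}$ (for $U'$ a suitable smaller ball inside $U$) has positive upper Banach density \emph{with the same $x$}. Then exactly the machinery of Theorem \ref{theor.recurrence} applies: Theorem 1.25 of \cite{Bergelson0} gives $W=\{k:\overline{Bd}(F\cap(F-k)\cap\cdots\cap(F-rk))>0\}\in\E^*$, hence $W\cap A\in p$ via \eqref{equat3'}; for $k\in W\cap A$ the density set $A_{k,r}=I_k\cap M_{k,r}$ still has positive upper Banach density (using Fact 1, Fact 2, and Lemma 2.3 of \cite{Hindman}), and on $A_{k,r}$ one gets $u,T^ku,\ldots,T^{rk}u\in U$; this yields $\{k:\overline{Bd}(a:\lambda_aT^ax\in\cap_{j=0}^rT^{-jk}U)>0\}\in p$. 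Crucially $p$ and $x$ were chosen independently of $U$ (the same $p$ from the hypothesis \eqref{cond.theor.equiv.rhc.op}, the same reiteratively hypercyclic vector $x$), so this establishes $\E$-reiterative hypercyclicity with respect to $(\lambda_n)_n$.

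The main obstacle is purely bookkeeping of quantifiers: one must verify that nothing in the chain of lemmas forces $x$ or $p$ to depend on $U$ or $r$. The idempotent $p$ is fixed by the hypothesis; the set $W$ lands in $\E^*\subseteq p$ regardless; and the reiteratively hypercyclic vector $x$ is by definition one that handles all open sets at once. The only mild care needed is in passing from $U$ to the ball $B(y;\eps)\subseteq U$ and from $F$ (defined via $B(y;\eps/2)$) to the final conclusion about $U$ — but this is the same elementary estimate $\|T^{jk}u-y\|\le\|T^{jk}u-u_j\|+\|u_j-y\|<\eps$ used in Theorem \ref{theor.recurrence}, with $u=\lambda_aT^ax$ for the \emph{fixed} $x$. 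Hence the proof is a direct transcription, and I would simply write: "Following verbatim the proof of Theorem \ref{theor.recurrence}, keeping $x$ and $p$ fixed independently of $U$ and $r$, the result follows." For completeness I would spell out the easy direction as above and then invoke the body of the proof of Theorem \ref{theor.recurrence} for the converse.
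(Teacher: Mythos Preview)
Your proposal is correct and matches the paper's own approach exactly: the paper simply states that the result follows ``the same sketch of proof of Theorem \ref{theor.recurrence}'' without further detail, and your write-up supplies precisely that sketch while carefully tracking the quantifier change (a single $x$ and $p$ fixed before $U$ and $r$). Your observation that $W\in\E^*\subseteq p$ regardless of $U$ and that the reiteratively hypercyclic vector $x$ handles all open sets by definition is the only bookkeeping needed beyond Theorem \ref{theor.recurrence}, and you have it right.
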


     As a consequence we obtain Theorem \ref{charact.rhc.Op}, which gives us a  characterization of reiteratively hypercyclic operators.
     
    \textbf{Proof of theorem \ref{charact.rhc.Op}.}
    
 When considering $(\lambda_n)_n=1$, condition \eqref{cond.theor.equiv.rhc.op} holds with $A=\N$. The proof of Theorem \ref{theor.recurrence} shows that in fact $\{k\in\N: \overline{Bd}(a\in\N: T^ax\in\cap_{j=0}^{r} T^{-jk}(U))>0\}\in \E^*$.
\begin{question}
\label{rel.propBD.rhc}
 Obviously, any reiteratively hypercyclic operator is hypercyclic and satisfies property $\p_{\mathcal{\overline{BD}}}$. We do not know anything about the converse. We wonder if this is true at least for weighted shifts. Observe that weighted shifts satisfying property $\p_{\mathcal{\overline{BD}}}$ are recurrent, therefore hypercyclic \cite{Costakis}. Hence, it is natural to ask the following question: does any weighted backward shift satisfying property $\p_{\mathcal{\overline{BD}}}$ is reiteratively hypercyclic?
 \end{question}
 \section*{Note Added in Proof}
 It should be noted that in Remark 3 (a) \cite{BMPP1}, it has been shown that any hypercyclic operator satisfying property $\p_{\mathcal{\overline{BD}}}$ is reiteratively hypercyclic. In particular, Question \ref{rel.propBD.rhc} can be answered affirmatively. Now, taking into account this fact, Theorem \ref{charact.rhc.Op} can be reformulated as follows.
  \begin{theorem}
  Let $T\in \Lin (X)$ a  hypercyclic operator. The following are equivalent:
  
  i) $T$ is reiteratively hypercyclic
  
  ii)  there exists some $x\in X$ such that for any non-empty open set $U$ in $X$ and any $r\in \N$, 
  \[
  \Big\{k\in \N: \overline{Bd}\Big(a\in \N: T^ax\in T^{-k}U\cap \cdots\cap T^{-rk}U\cap U\Big)>0\Big\}\in \E^*
  \]
  iii)   for any non-empty open set $U$ in $X$ there exists some $x\in X$ such that for  any $r\in \N$, 
  \[
  \Big\{k\in \N: \overline{Bd}\Big(a\in \N: T^ax\in T^{-k}U\cap \cdots\cap T^{-rk}U\cap U\Big)>0\Big\}\in \E^*.
  \]
  \end{theorem}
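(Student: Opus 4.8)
The plan is to prove the cycle of implications $(i)\Rightarrow(ii)\Rightarrow(iii)\Rightarrow(i)$, using Theorem \ref{charact.rhc.Op} and the result recalled at the start of this Note.

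The implication $(i)\Rightarrow(ii)$ is precisely Theorem \ref{charact.rhc.Op}, so I would simply invoke it. (For the record, the mechanism behind that theorem is the case $(\lambda_n)_n\equiv 1$ of the proof of Theorem \ref{theor.recurrence}: if $x$ is a reiteratively hypercyclic vector and $F=\{n\in\N:T^nx\in U\}$, then feeding $F$ together with the admissible generalized polynomials $g_j(k)=-jk$, $1\le j\le r$, into Theorem~1.25 of \cite{Bergelson0} gives $W:=\{k\in\N:\overline{Bd}(F\cap(F-k)\cap\cdots\cap(F-rk))>0\}\in\E^*$; since for each $k\in W$ the set $\{a:a,a+k,\dots,a+rk\in F\}$ has positive upper Banach density and is contained in $\{a:T^ax\in\cap_{j=0}^{r}T^{-jk}U\}$, the set appearing in $(ii)$ contains $W$, hence lies in $\E^*$ because $\E^*=\cap_{p\in\E}p$ is a filter, in particular upward closed.) The same vector $x$ works for all non-empty open $U$, which is exactly what $(ii)$ asserts, and no use is made of the standing hypercyclicity hypothesis here.

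The implication $(ii)\Rightarrow(iii)$ is immediate: the vector provided by $(ii)$ witnesses $(iii)$ for every $U$ simultaneously. For $(iii)\Rightarrow(i)$ I would first extract property $\p_{\mathcal{\overline{BD}}}$. Given a non-empty open $U$, take $x$ as in $(iii)$ and fix any $r\in\N$; the set $\{k\in\N:\overline{Bd}(a:T^ax\in\cap_{j=0}^{r}T^{-jk}U)>0\}$ belongs to $\E^*$, which is a \emph{proper} filter (each $p\in\E$ is an ultrafilter, so $\emptyset\notin p$, whence $\emptyset\notin\E^*$), so this set is non-empty. Picking any $k$ in it and using $\cap_{j=0}^{r}T^{-jk}U\subseteq U$ (the $j=0$ factor), we obtain $\overline{Bd}(a:T^ax\in U)>0$, i.e.\ $\{n:T^nx\in U\}\in\mathcal{\overline{BD}}$. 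Since $U$ was arbitrary, $T$ satisfies property $\p_{\mathcal{\overline{BD}}}$; as $T$ is hypercyclic by assumption, the result of \cite{BMPP1} recalled above (any hypercyclic operator with property $\p_{\mathcal{\overline{BD}}}$ is reiteratively hypercyclic) yields $(i)$, closing the cycle.

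The genuinely nontrivial ingredients are therefore external and already available: the Bergelson--McCutcheon theorem (used inside Theorem \ref{charact.rhc.Op}) and the \cite{BMPP1} implication; the argument above is essentially bookkeeping. The one point needing a moment's care is the use of the fact that $\E^*$ is an upward-closed proper filter, which is what both lets $W$ be enlarged to the full set in $(ii)$ and guarantees the non-emptiness exploited in $(iii)\Rightarrow(i)$.
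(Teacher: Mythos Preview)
Your proposal is correct and matches the paper's intended argument: the reformulated theorem is obtained by combining Theorem~\ref{charact.rhc.Op} (which gives $(i)\Leftrightarrow(ii)$) with the result of \cite{BMPP1} recalled in the Note, and your cycle $(i)\Rightarrow(ii)\Rightarrow(iii)\Rightarrow(i)$ is exactly the natural way to organize this. Your extraction of property $\p_{\mathcal{\overline{BD}}}$ from $(iii)$ via the $j=0$ factor is the same mechanism as the easy direction of Theorem~\ref{theor.recurrence}, and the filter facts about $\E^*$ you invoke are noted in the paper after Proposition~\ref{bilateral.B_w.top.Erecurrent}.
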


   \section*{Acknowledgement}
     We would like to thank Prof. Alfred Peris for his support as supervisor and also the referee for her or his careful comments which have greatly improved the presentation of this paper.

\end{document}